\newtheorem{theorem}{Theorem}
\newtheorem{lemma}{Lemma}
\newtheorem{definition}{Definition}
\newtheorem{remark}{Remark}
\newtheorem{example}{Example}
\newcommand{\SSS}{\mathbb{S}}
\newcommand{\NN}{\mathbb{N}}
\newcommand{\FF}{\mathbb{F}}
\newcommand{\Fq}{\mathbb{F}_q}
\newcommand{\Fqn}{\mathbb{F}_{q^n}}
\newcommand{\Fqm}{\mathbb{F}_{q^m}}
\newcommand{\cK}{\mathcal K}
\newcommand{\cB}{\mathcal B}
\newcommand{\D}{\mathcal D}
\newcommand{\cS}{\mathcal S}
\def\S{\mathbb{S}}
\def\F{\mathbb{F}}
\def\Fq{{\mathbb{F}}_q}
\def\End{\mathrm{End}}
\def\Aut{\mathrm{Aut}}
\def\PG{\mathrm{PG}}
\def\GammaL{\mathrm{\Gamma L}}
\def\dim{\mathrm{dim}}
\def\im{\mathrm{im}}
\def\id{\mathrm{id}}
\def\tr{\mathrm{tr}}
\def\GTF{\mathrm{GTF}}
\newcommand{\npmatrix}[1]{\left[ \begin{matrix} #1 \end{matrix} \right]}
\begin{document}
\title{On BEL-configurations and finite semifields}
\author{Michel Lavrauw and John Sheekey}
\date{\today}
\maketitle

\begin{abstract}
The BEL-construction for finite semifields was introduced in \cite{BEL2007}; a geometric method for constructing semifield spreads, using so-called BEL-configurations in $V(rn,q)$. In this paper we investigate this construction in greater detail, and determine an explicit multiplication for the semifield associated with a BEL-configuration in $V(rn,q)$, extending the results from \cite{BEL2007}, where this was obtained only for $r=n$.
Given a BEL-configuration with associated semifields spread $\cS$, we also show how to find a BEL-configuration corresponding to the dual spread $\cS^d$. 
Furthermore, we study the effect of polarities in $V(rn,q)$ on BEL-configurations, leading to a characterisation of BEL-configurations associated to symplectic semifields.

We give precise conditions for when two BEL-configurations in $V(n^2,q)$ define isotopic semifields. We define operations which preserve the BEL property, and show how non-isotopic semifields can be equivalent under this operation. We also define an extension of the ```switching'' operation on BEL-configurations in $V(2n,q)$ introduced in \cite{BEL2007}, which, together with the transpose operation, leads to a group of order $8$ acting on BEL-configurations.
\end{abstract}

\footnote{MSC: 12K10,17A35,51A40,51A35. \\Keywords: Finite semifield; Spread; BEL-configuration.}

\section{Introduction}

A \emph{finite semifield} $(\SSS ,+,\circ)$ is a finite division algebra except that associativity of multiplication 
is not assumed. Precisely: $(\SSS,+)$ is an abelian group; both distributive laws hold; $(\SSS,\circ)$ has no
zero divisors and has an identity. If we do not assume a multiplicative identity element, the structure is known as a \emph{presemifield}.  When there is no confusion possible, we will just write $\SSS$ instead of $(\SSS,+,\circ)$, and
we will conveniently choose between the notations $\SSS(x,y)$ and $x \circ y$ for the multiplication of two elements $x,y\in \SSS$.

The first proper finite semifields were constructed by Dickson in 1906. Albert defined an important class of semifields known as \emph{generalized twisted fields}. Semifields play a key role in the study of projective planes, as they correspond to translation planes which are also dual translation planes, and they are also related to
various other structures from finite geometry and the theory of finite fields.
For more background, history, and known classifications, see for example \cite{Knuth1965}, \cite{Kantor2006}, \cite{LaPo2011} and \cite{Lavrauw2013}. Most of the remainder of this section is standard knowledge in the subject, but we reproduce it here for clarity of exposition and to establish notation and conventions.

To each semifield there are several important substructures, all of which are isomorphic to finite fields. The {\it
left nucleus}\index{left nucleus} ${\mathbb{N}}_l({\mathbb{S}})$, 
{\it the middle nucleus}\index{middle nucleus}
${\mathbb{N}}_m({\mathbb{S}})$, and the {\it right nucleus}\index{right nucleus}
${\mathbb{N}}_r({\mathbb{S}})$ are defined as follows: 
\begin{align*}
{\mathbb{N}}_l({\mathbb{S}})&:=\{x~:~ x \in {\mathbb{S}} ~|~ x \circ (y\circ
z)=(x\circ y)\circ z, ~\forall y,z \in {\mathbb{S}}\}, \\
{\mathbb{N}}_m({\mathbb{S}})&:=\{y~:~ y \in {\mathbb{S}} ~|~ x \circ (y\circ
z)=(x\circ y)\circ z, ~\forall x,z \in {\mathbb{S}}\}, \\
{\mathbb{N}}_r({\mathbb{S}})&:=\{z~:~ z \in {\mathbb{S}} ~|~ x \circ
(y\circ z)=(x\circ y)\circ z, ~\forall x,y \in {\mathbb{S}}\}.
\end{align*}
The intersection $\NN(\SSS)$ of the nuclei is called the \emph{associative centre}\index{associative center}, and the elements of $\NN(\SSS)$ which commute with all other elements of $\SSS$ form the \emph{centre}\index{center} $Z(\SSS)$. Then $\SSS$ has the structure of a left vector space over $\NN_l(\SSS)$, which we may denote by $V_l(\SSS)$. Similarly $\SSS$ has the structure of a left and right vector space over $\NN_m(\SSS)$, and a right vector space over $\NN_r(\SSS)$.

Let $\SSS$ be an $n$-dimensional semifield over $\Fq$, i.e. a semifield of order $q^n$ with centre containing $\Fq$. We identify the elements of $\SSS$ with the elements of $\Fqn$. It follows from the definition that there exist unique elements $c_{ij} \in \Fqn$ such that
\[
\SSS(x,y) = \sum_{i,j=0}^{n-1} c_{ij} x^{q^i}y^{q^j}.
\]

For example, Albert's generalized twisted fields are defined as follows. Let $\alpha, \beta \in \Aut(\Fqn:\Fq)$, and $c \in \Fqn$ such that $c \notin \{x^{\alpha-1}y^{\beta-1}:x,y \in \Fqn\}$. Then the multiplication on
$\Fqn$
\[
x\circ y = xy - cx^{\alpha}y^{\beta}
\]
defines a presemifield of order $q^n$. We will denote this presemifield by $\mathrm{GTF}_{c,\alpha,\beta}$.
If $x^{\alpha} = x^{q^k}$ and $y^{\beta} = y^{q^m}$, we see that
\[
c_{ij} = \left\{\begin{array}{ll}
1& \textrm{if $i=j=0$},\\
-c & \textrm{if $i=k,j=m$},\\
0 & \textrm{otherwise}.
\end{array}\right.
\]

Continuing with the semifield $\SSS$ as before we have that each $y\in \SSS$ defines an $\Fq$-endomorphism of $\SSS$, denoted by $R_y$, defined by
\begin{eqnarray}
\label{eqn:rightmult1}
R_y(x) := \SSS(x,y).
\end{eqnarray}

We call this the endomorphism of right multiplication by $y$. Since $\SSS$ has no zero divisors, $R_y$ is nonsingular for each $y \ne 0$, and the set $R(\SSS) := \{R_y :y \in \SSS\}$ is an $\Fq$-subspace of $\Fq$-endomorphisms of $\SSS$, where each nonzero element is nonsingular. We call $R(\SSS)$ the \emph{spread set} of $\SSS$. 

Similarly we can define the endomorphisms of left multiplication
\[
L_x(y) := \SSS(x,y).
\]
If we define $r_i(y) = \sum_j c_{ij} y^{q^j}$, and $l_j(x) = \sum_i c_{ij}x^{q^i}$, then we have that
\begin{align}
\label{eqn:rightmult2}
R_y(x) &= \sum_i \left(\sum_j c_{ij} y^{q^j}\right) x^{q^i}= \sum_i r_i(y)x^{q^i};\\
L_x(y) &= \sum_j \left(\sum_i c_{ij} x^{q^i}\right) x^{q^j}= \sum_j l_j(x)y^{q^j}.
\end{align}

An \emph{$n$-spread} $\cS$ in $V(rn,q)$ is a set of subspaces of dimension $n$ which pairwise intersect trivially, and which partition the nonzero elements of $V(rn,q)$. It is well known that every (pre)semifield defines a spread in the following way. We represent the elements of $V(2n,q)$ by elements of $\SSS^2$. Then for each $y \in \SSS$, we define
\[
A_y := \{(x,\SSS(x,y)):x \in \S\} = \{(x,R_y(x)):x \in \S\}.
\]
We also define
\[
A_{\infty} = \{(0,x):x \in \SSS\}.
\]
Then the set of subspaces of dimension $n$
\begin{eqnarray}
\label{def:semspread}
\cS(\SSS) := \{A_y:y \in \SSS\} \cup \{A_{\infty}\} 
\end{eqnarray}
defines a spread.

Conversely, every semifield spread defines a (pre)semifield. A spread $\cS$ is a \emph{semifield spread} if there exists some element $T \in \cS$, and a group $G \leq \GammaL(2n,q)$ fixing $T$ pointwise and acting transitively on the other elements of $\cS$. Note that in the spread $\cS(\SSS)$ the special element is $S_\infty$.

Two semifields $\SSS$ and $\SSS'$ are \emph{isotopic} if there exist nonsingular linear maps $A,B,C:\SSS \rightarrow \SSS'$ such that
\[
\SSS'(A(x),B(y)) = C(\SSS(x,y))
\]
for all $x,y \in \SSS$. We denote the \emph{isotopy class} of $\SSS$ by $[\SSS]$.

A well known result of Albert \cite{Albert1960} says that two (pre)semifields are isotopic if and only if the spreads they define are equivalent (under the action of $\GammaL(2n,q)$). For equivalent spreads $\cS_1$ and $\cS_2$ we write $\cS_1\simeq \cS_2$.

It was shown in \cite{Knuth1965} that from each (pre)semifield, we can obtain a chain of (up to) six isotopy classes, which we call the \emph{Knuth orbit} and denote by $\cK(\SSS)$. These are obtained via an action of the symmetric group $S_3$ on the set of isotopism classes of semifields, generated by the two operations \emph{dual}, denoted by $[\SSS]^d$, and \emph{transpose}, denoted by $[\SSS]^t$. 

The action of $S_3$ in \cite{Knuth1965} was defined using the cubical array of the structure constants of the semifield, but here we give the alternative more geometric definition of the Knuth orbit.
The dual of $[\SSS]$ corresponds to the dual projective plane and the opposite algebra $\SSS^d$, that is
\[
\SSS^d(x,y) = \SSS(y,x)
\]
and $[\SSS]^d:=[\SSS^d]$. 
The transpose $[\SSS]^t$ of $[\SSS]$ can be defined using the spread $\cS(\SSS)$, and corresponds to the dual spread with respect to some nondegenerate symmetric bilinear form on $V(2n,q)$. Note that different choices of forms give equivalent spreads. The spread set of $[\SSS]^t$ then consists of the adjoint linear transformations with respect to some nondegenerate symmetric bilinear form on $V(n,q)$. For a particular choice of form and basis, this adjoint becomes the transpose operation on matrices, which explains the notation and the name. See for example \cite{Knuth1965} for this. Here we will choose a different form, which better suits our needs for this paper. 

We define the form $b_{\epsilon}$ on $V(2n,q)$ by
\begin{eqnarray}
b_{\epsilon}((a,b),(c,d)) = \tr(ad-bc),
\end{eqnarray}
where $\tr$ denotes the trace function from $\Fqn$ to $\Fq$, that is 
\[
\tr(x) = x+x^q+\ldots+x^{q^{n-1}}.
\] 
Given a subspace $M$, define the dual of $M$, denoted by $M^{\epsilon}$, by
\begin{eqnarray}
\label{def:epsilon}
M^{\epsilon} = \{v:v \in V(2n,q)\mid b_{\epsilon}(u,v)=0 ~\forall u \in M \}.
\end{eqnarray}

Let $f$ be an $\Fq$-endomorphism of $\Fqn$. It is well known that 
\[
f(x) = \sum_{i=0}^{n-1} f_i x^{q^i}
\]
for some unique $f_i \in \Fqn$. We denote by $\hat{f}$ the adjoint of $f$ with respect to the nondegenerate symmetric bilinear form on $V(n,q)$ defined by
\[
(x,y) \mapsto \tr(xy).
\]
That is, $\hat{f}$ is the unique endomorphism such that
\[
\tr(f(x)y) = \tr(x\hat{f}(y))
\]
for all $x,y \in \Fqn$. Then we have that
\[
\hat{f}(x) = \sum_{i=0}^{n-1} f_{n-i}^{q^i} x^{q^i}.
\]
It is not difficult to see that $\hat{\hat{f}}=f$, and $\widehat{fg}= \hat{g}\hat{f}$ for all endomorphisms $f,g$.

The isotopism class $[\SSS]^t$ corresponds to the dual spread of $\cS(\SSS)$, that is $\cS(\SSS)^{\epsilon} = \{T^{\epsilon}:T \in \cS(\SSS)\}$. Note that $A_{\infty}^{\epsilon} = A_{\infty}$.

Now we can see that
\[
A_y^{\epsilon} = \{(x,\hat{R}_y(x)):x \in \Fqn\},
\]
since
\begin{align*}
b_{\epsilon}((x,R_y(x)),(z,\hat{R}_y(z))) &= \tr(x\hat{R}_y(z) - R_y(x)z)\\
		&= \tr(z(R_y(x) - R_y(x))\\
		&= 0
\end{align*}
for all $x,z \in \Fqn$.

This allows us to define the transpose of $[\SSS]$ directly from the multiplication $\SSS(x,y)$.
We have shown the following.
\begin{lemma}
Let $\SSS$ be a semifield. Then $[\SSS]^t=[\SSS^t]$ where
\begin{equation}
\label{eqn:tmult}
\SSS^t(x,y) := \hat{R}_y(x) = \sum_i (r_{n-i}(y))^{q^i}x^{q^i},
\end{equation}
where $R_y$ and $r_i$ are as defined in (\ref{eqn:rightmult1}) and (\ref{eqn:rightmult2}).
\end{lemma}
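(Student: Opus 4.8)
The plan is to exploit the two facts already established in the excerpt: first, that the transpose isotopy class $[\SSS]^t$ is \emph{defined} to be the one corresponding to the dual spread $\cS(\SSS)^{\epsilon}$; and second, that the dual of a spread element is given explicitly by $A_y^{\epsilon} = \{(x,\hat{R}_y(x)):x \in \Fqn\}$, with $A_{\infty}^{\epsilon}=A_{\infty}$. So the strategy is to produce a presemifield whose associated spread is exactly $\cS(\SSS)^{\epsilon}$, read off its multiplication, and then appeal to Albert's theorem (isotopy $\Leftrightarrow$ spread equivalence) to conclude $[\SSS^t]=[\SSS]^t$.

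First I would define the candidate multiplication by $\SSS^t(x,y):=\hat{R}_y(x)$ and check that this is a presemifield. Biadditivity in $x$ is immediate since each $\hat{R}_y$ is $\Fq$-linear; biadditivity in $y$ follows because $y \mapsto R_y$ is additive (as $\SSS$ is biadditive) and the adjoint operation is additive, so $\hat{R}_{y_1+y_2}=\hat{R}_{y_1}+\hat{R}_{y_2}$. The absence of zero divisors follows from the fact that $R_y$ is nonsingular for $y \neq 0$ and the adjoint of a nonsingular map is nonsingular. Hence $\{\hat{R}_y:y \in \SSS\}$ is a genuine spread set and $\SSS^t$ is a presemifield.

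Next I would identify the spread of $\SSS^t$. By the construction in (\ref{def:semspread}) applied to $\SSS^t$, its spread elements are $\{(x,\SSS^t(x,y)):x\in\Fqn\}=\{(x,\hat{R}_y(x)):x\in\Fqn\}=A_y^{\epsilon}$, together with the special element $A_{\infty}=A_{\infty}^{\epsilon}$. Thus $\cS(\SSS^t)=\cS(\SSS)^{\epsilon}$, which is precisely the spread defining $[\SSS]^t$, and Albert's theorem gives $[\SSS^t]=[\SSS]^t$.

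Finally, the explicit formula is a direct application of the adjoint formula $\hat{f}(x)=\sum_{i=0}^{n-1}f_{n-i}^{q^i}x^{q^i}$ recorded above: writing $R_y(x)=\sum_i r_i(y)x^{q^i}$, so that the coefficients of $R_y$ are $f_i=r_i(y)$, yields $\hat{R}_y(x)=\sum_i (r_{n-i}(y))^{q^i}x^{q^i}$, which is (\ref{eqn:tmult}). The only real points requiring care are bookkeeping ones: keeping the index $n-i$ reduced modulo $n$ (so that $r_{n-0}=r_0$), and correctly matching the parametrisation of the dual spread by $y$ with the special role of $A_{\infty}$. The substantive content---that duality of spread elements is realised by the adjoint---has already been done in the excerpt, so I expect no serious obstacle beyond this indexing.
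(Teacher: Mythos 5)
Your proposal is correct and follows essentially the same route as the paper: the paper also defines $[\SSS]^t$ via the dual spread $\cS(\SSS)^{\epsilon}$, uses the computation $A_y^{\epsilon}=\{(x,\hat{R}_y(x)):x\in\Fqn\}$ together with $A_{\infty}^{\epsilon}=A_{\infty}$ to identify the dual spread as the spread of the multiplication $\hat{R}_y(x)$, and reads off the explicit formula from the adjoint expansion $\hat{f}(x)=\sum_i f_{n-i}^{q^i}x^{q^i}$. Your additional checks (that $\SSS^t$ is a presemifield, and the explicit appeal to Albert's theorem) are details the paper leaves implicit, but they are correct and do not change the argument.
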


In \cite{Knuth1965} Knuth also showed that these operations are well defined up to isotopism. This fact is
easily verified using the geometric description of the transpose and dual operations given above, and goes
as follows.

Suppose $\SSS$ is isotopic to $\SSS'$, i.e. there exist a triple of invertible linear transformations $(A,B,C)$ such that $\SSS'(A(x),B(y)) = C(\SSS(x,y))$ for all $x,y \in \SSS$. Then 
\[
C(\SSS^d(x,y)) = C(\SSS(y,x)) = \SSS'(A(y),B(x)) = \SSS'^d(B(x),A(y)),
\]
and hence $\SSS^d$ is isotopic to $\SSS'^d$, with corresponding isotopism $(B,A,C)$.

Now $CR_y(x) = C(\SSS(x,y)) = \SSS'(A(x),B(y)) = R'_{B(y)}A(x)$ for all $x,y\in \SSS$, and hence $CR_y = R'_{B(y)}A$ for all $y\in \SSS$. Taking the adjoint of both sides, we get $\hat{R}_y \hat{C} = \hat{A} \hat{R}'_{B(y)}$. Hence
\[
\hat{A}^{-1}(\SSS^t(x,y)) = \hat{A}^{-1}\hat{R}_y(x) = \hat{R}'_{B(y)}\hat{C}^{-1}(x) = \SSS'^t(\hat{C}^{-1}(x),B(y)),
\]
implying that $\SSS^t$ is isotopic to $\SSS'^t$, as claimed. The corresponding isotopism is 
$(\hat{C}^{-1},B,\hat{A}^{-1})$.

It is clear from the above that these two operations satisfy $t^2=d^2=\id$, $tdt=dtd$, and so form a group isomorphic to the symmetric group $S_3$. Hence we get a chain of (up to) $6$ isotopism classes. The set of these isotopism classes is called the Knuth orbit of a semifield $\SSS$:
\begin{eqnarray}
\cK(\SSS)=\{[\SSS],[\SSS^t],[\SSS^d],[\SSS^{td}],[\SSS^{dt}],[\SSS^{tdt}]=[\SSS^{dtd}]\}.
\end{eqnarray}
We summarise a multiplication for each isotopism class in the Knuth orbit of $\SSS$ in the below table. We give two equivalent expressions for convenience. Here indices such as $-i$ are understood to be modulo $n$, and all sums are understood to be over all $i,j \in \{0,\ldots ,n-1\}$.

\begin{center}
\begin{tabular}{|c|c|c|}
\hline
$\SSS$ & $\sum c_{ij} x^{q^i}y^{q^j}$& $\sum c_{ij} x^{q^i}y^{q^j}$\\
\hline
$\SSS^t$ & $\sum c_{-i,i-j}^{q^i} x^{q^i} y^{q^j}$& $\sum c_{ij}^{q^{-i}} x^{q^{-i}}y^{q^{j-i}}$\\
\hline
$\SSS^d$ & $\sum c_{ji} x^{q^i}y^{q^j}$& $\sum c_{ij} x^{q^j}y^{q^i}$\\
\hline
$\SSS^{td}$ & $\sum c_{-j,j-i}^{q^j} x^{q^i} y^{q^j}$& $\sum c_{ij}^{q^{-i}} x^{q^{j-i}}y^{q^{-i}}$\\
\hline
$\SSS^{dt}$ & $\sum c_{j-i,-i}^{q^i} x^{q^i} y^{q^j}$& $\sum c_{ij}^{q^{-j}} x^{q^{-j}}y^{q^{i-j}}$\\
\hline
$\SSS^{dtd}$ & $\sum c_{i-j,-j}^{q^j} x^{q^i} y^{q^j}$& $\sum c_{ij}^{q^{-j}} x^{q^{i-j}}y^{q^{-j}}$\\
\hline
\end{tabular}
\end{center}

\begin{example}
If $\SSS = \mathrm{GTF}_{c,\alpha,\beta}$, then the Knuth orbit is represented in the following table.

\begin{center}
\begin{tabular}{|c|c|c|}
\hline
& Multiplication & GTF\\
\hline
$\SSS$ & $xy - c x^{\alpha}y^{\beta}$&$(c,\alpha,\beta)$\\
\hline
$\SSS^t$ & $xy - c^{1/\alpha} x^{1/\alpha}y^{\beta/\alpha}$&$(c^{1/\alpha},1/\alpha,\beta/\alpha)$\\
\hline
$\SSS^d$ & $xy -  c x^{\beta}y^{\alpha}$&$(c,\beta,\alpha)$\\
\hline
$\SSS^{td}$ & $xy - c^{1/\alpha} x^{\beta/\alpha}y^{1/\alpha}$&$(c^{1/\alpha},\beta/\alpha,1/\alpha)$\\
\hline
$\SSS^{dt}$ & $xy - c^{1/\beta} x^{1/\beta}y^{\alpha/\beta}$&$(c^{1/\beta},1/\beta,\alpha/\beta)$\\
\hline
$\SSS^{dtd}$ & $xy - c^{1/\beta} x^{\alpha/\beta}y^{1/\beta}$&$(c^{1/\beta},\alpha/\beta,1/\beta)$\\
\hline
\end{tabular}
\end{center}

%
\end{example}
In \cite{BiJhJo1999}, the question of when two generalized twisted fields are isotopic was answered. We state this result here, as we will need it in later sections.
\begin{theorem}[\cite{BiJhJo1999}]\label{thm:isotopism_GTF}
The semifields $\GTF_{c,\alpha,\beta}$ and $\GTF_{c',\alpha',\beta'}$ are isotopic if and only if either
\begin{itemize}
\item[(i)] $\alpha=\alpha'$, $\beta=\beta'$ and $c^\rho=c'a^{1-\alpha}b^{1-\beta}$ for some $\rho \in \Aut(\F_{q^n})$, $a,b \in \F_{q^n}^*$, or
\item[(ii)] $\alpha'=1/\alpha$, $\beta'=1/\beta$ and $c'^{\rho}=c^{-1}a^{1-\alpha} b^{1-\beta}$, for some $\rho \in \Aut(\F_{q^n})$, $a,b \in \F_{q^n}^*$.
\end{itemize}


\end{theorem}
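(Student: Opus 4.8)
My plan is to pass from isotopy to an identity of linearised polynomials and then compare coefficients. Write $\alpha=q^s$, $\beta=q^t$ and observe first that if $s\equiv 0$, $t\equiv 0$ or $s\equiv t\pmod n$ then the multiplication factors through an additive bijection (as $x\,g(y)$, $f(x)\,y$, or $h(xy)$), so that $\GTF_{c,\alpha,\beta}$ is isotopic to $\Fqn$ itself; I therefore assume $s,t,s-t\not\equiv 0$ throughout. By Albert's theorem an isotopy is realised by an $\Fq$-semilinear triple $(A,B,C)$; writing its Frobenius part as $\rho$ and replacing the target parameter $c'$ by $c'^{\rho^{-1}}$, I reduce both directions to $\Fq$-\emph{linear} triples and reintroduce $\rho$ only at the very end, which is precisely where the automorphism $\rho$ in (i) and (ii) comes from.

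For sufficiency I produce explicit isotopies. Substituting $(x,y)\mapsto(ax,by)$ gives $\GTF_{c,\alpha,\beta}(ax,by)=ab\,\GTF_{c'',\alpha,\beta}(x,y)$ with $c''=c\,a^{\alpha-1}b^{\beta-1}$, and the Galois twist $x\mapsto x^{\rho}$ sends $\GTF_{c,\alpha,\beta}$ to $\GTF_{c^{\rho},\alpha,\beta}$; composing the two realises every case of (i). For (ii) I search for a monomial semilinear triple $A(x)=a_1x^{\gamma_1}$, $B(y)=b_1y^{\gamma_2}$, $C(z)=c_1z^{\gamma_3}$. Expanding $\GTF_{c',1/\alpha,1/\beta}(A(x),B(y))=C(\GTF_{c,\alpha,\beta}(x,y))$ produces two monomials on each side, and matching the \emph{product} monomial of one side against the \emph{twisted} monomial of the other forces $\gamma_1=\alpha\gamma_3$, $\gamma_2=\beta\gamma_3$, and, after eliminating $c_1$, the relation $c'=c^{-\gamma_3}a_1^{1-1/\alpha}b_1^{1-1/\beta}$. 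Reparametrising $\rho=\gamma_3^{-1}$, $a=a_1^{-\rho/\alpha}$, $b=b_1^{-\rho/\beta}$ turns this into $c'^{\rho}=c^{-1}a^{1-\alpha}b^{1-\beta}$, which is exactly (ii).

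For necessity I expand a linear isotopy $\GTF_{c',\alpha',\beta'}(A(x),B(y))=C(\GTF_{c,\alpha,\beta}(x,y))$ with $A=\sum_iA_ix^{q^i}$, $B=\sum_jB_jy^{q^j}$, $C=\sum_kC_kz^{q^k}$, and compare coefficients of the linearly independent functions $x^{q^i}y^{q^j}$. Setting $\alpha'=q^{s'}$, $\beta'=q^{t'}$, this yields (indices mod $n$)
\[
A_iB_j-c'A_{i-s'}^{q^{s'}}B_{j-t'}^{q^{t'}}=\begin{cases}C_i,& i\equiv j,\\ -C_{i-s}\,c^{q^{i-s}},& i-j\equiv s-t,\\ 0,&\text{otherwise}.\end{cases}
\]
Reading off supports, the bidegree support of the left-hand side lies in the union of the two grids $\mathrm{supp}(A)\times\mathrm{supp}(B)$ and $(\mathrm{supp}(A)+s')\times(\mathrm{supp}(B)+t')$, whereas that of the right-hand side lies in the two slope-one lines $\{i\equiv j\}$ and $\{i-j\equiv s-t\}$; hence the symmetric difference of the two grids is contained in these two lines. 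The aim is to conclude that $\mathrm{supp}(A)$ and $\mathrm{supp}(B)$ are singletons, i.e. that $A,B$ and then $C$ are monomials, after which the two terms on each side must match either straight — forcing $(s',t')=(s,t)$ and giving case (i) — or crossed — forcing $(s',t')=(-s,-t)$ and giving case (ii); in each case the remaining coefficient equations reproduce the stated relation between $c$ and $c'$.

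The main obstacle is exactly this monomiality step. The soft support count is inconclusive in the degenerate situation where $\mathrm{supp}(A)$ and $\mathrm{supp}(B)$ are unions of cosets of $\langle s'\rangle$ and $\langle t'\rangle$, for then the two grids coincide and the containment in the two lines becomes vacuous; such periodic supports must be excluded by hand, using that $A,B,C$ are invertible linearised (permutation) polynomials together with the precise values forced by the diagonal and the $(s-t)$-line equations. Taming these low-order coincidences — rather than the essentially bookkeeping final matching — is where the real work lies, and is the reason the original argument of \cite{BiJhJo1999} is involved.
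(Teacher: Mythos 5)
The paper offers no proof of this theorem at all: it is imported verbatim from \cite{BiJhJo1999} as a known result, so your proposal stands or falls on its own merits. The sufficiency half stands. The scaling $(x,y)\mapsto(ax,by)$, the Galois twist, and the crossed monomial matching for case (ii) are correct computations; in particular your reparametrisation $\rho=\gamma_3^{-1}$, $a=a_1^{-\rho/\alpha}$, $b=b_1^{-\rho/\beta}$ does turn $c'=c^{-\gamma_3}a_1^{1-1/\alpha}b_1^{1-1/\beta}$ into $c'^{\rho}=c^{-1}a^{1-\alpha}b^{1-\beta}$. The necessity half, however, is the entire content of the theorem, and your argument stops exactly where the difficulty begins. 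The coefficient system you derive is right, and the soft support count does force the symmetric difference of the two grids $\mathrm{supp}(A)\times\mathrm{supp}(B)$ and $(\mathrm{supp}(A)+s')\times(\mathrm{supp}(B)+t')$ into the two lines; but, as you yourself note, when $\mathrm{supp}(A)$ and $\mathrm{supp}(B)$ are unions of cosets of $\langle s'\rangle$ and $\langle t'\rangle$ in $\ZZ/n\ZZ$ the symmetric difference is empty and the count says nothing. This degenerate case is not a fringe nuisance: invertible additive maps with periodic support exist in abundance (any invertible polynomial in $x^{q^{s'}}$ has such support), and excluding solutions of the full system among them — including cancellations inside the intersection of the two grids — is precisely the hard analysis carried out in \cite{BiJhJo1999} via collineation groups, and by Albert before that via nuclei and normalisation. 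Writing that these cases ``must be excluded by hand'' is a statement of intent, not a proof, so the ``only if'' direction remains unproven.

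There is also a secondary foundational issue worth fixing if you pursue this route: writing $A=\sum_i A_i x^{q^i}$ presumes that the isotopy maps are $\Fq$-semilinear, whereas isotopy only supplies additive (i.e.\ $\F_p$-linear) bijections. For $q$ non-prime you must either justify reducing to $\Fq$-semilinear triples (your appeal to Albert's spread-equivalence theorem gives elements of $\GammaL(2n,q)$, which is a genuine restriction that needs an argument relating the centre of the twisted field to $\Fq$), or else run the whole computation with $p$-polynomials — where the same periodic-support gap reappears, now inside $\ZZ/(n\log_p q)\ZZ$. In short: sufficiency is done, necessity is not, and the missing step is the substance of the cited theorem rather than bookkeeping.
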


\section{BEL-construction}

%
%


The concept of a \emph{BEL-configuration} was introduced in \cite{BEL2007}, and further developed in \cite{Lavrauw2008} and \cite{Lavrauw2011}. In this section we will recall the definition, and how a BEL-configuration can be used to construct of a semifield spread. We restrict ourselves to BEL-configurations where both subspaces $U$ and $W$ are subspaces as in the original paper \cite{BEL2007}, and will not consider the generalization  \cite{Lavrauw2011} to linear sets. We obtain an explicit formula for the pre-semifield multiplication obtained from a BEL-configuration in $V(rn,q)$.

For a spread $\D$ and a subset $A$ of a vector space $V$, we define the sets
\begin{align*}
\cB_{\D}(A) &= \{S \in \D \mid S \cap A \ne \{0\}\};\\
\tilde{\cB}_{\D}(A) &=  \bigcup_{S \in \cB(A)} \{x:x \in S^{\times}\}.
\end{align*}
That is, $\cB_{\D}(A)$ is the set of elements of $\D$ intersecting $A$ nontrivially, and $\tilde{\cB}_{\D}(A)$ is the set of nonzero vectors contained in those elements. We will omit the subscript $\D$ whenever there is no ambiguity. If $0 \ne v \in V$, we denote by $\cB_{\D}(v)$ the unique element of $\D$ containing $v$. Then by definition, $\D = \{\cB_{\D}(v):v \in V^{\times}\}$.
 
A \emph{Desarguesian spread} is a spread obtained by field reduction, see e.g. \cite{LaVdV20??}.

\begin{definition}
\label{def:BEL}
A \emph{BEL-configuration} in $V(rn,q)$ is a triple $(\D,U,W)$ such that:
\begin{itemize}
\item
$\D$ is a Desarguesian $n$-spread;
\item
$U$ is a subspace of dimension $n$;
\item
$W$ is a subspace of dimension $rn-n$;
\item
$\cB(U) \cap \cB(W) = \emptyset$.
\end{itemize}
\end{definition}

We now choose a specific Desarguesian spread $\D_{r,n,q}$, which we will consider fixed for the remainder of this paper. 
We represent elements of $V(rn,q)$ by elements of $(\Fqn)^r$. 
 
For a vector $v = (v_1,\ldots,v_r)\ne 0$, $v_i \in \Fqn$, we define the vector subspace $\cB(v)$ of dimension $n$ in $V(rn,q)$ as follows:
\[
\cB(v) := \{(\alpha v_1,\alpha v_2,\ldots,\alpha v_r):\alpha \in \Fqn\} \leq V(rn,q).
\]
We then take the set of all such subspaces $\D_{r,n,q} = \{\cB(v):v \in V^{\times}\}$, which forms a Desarguesian spread. We will omit the subscripts when there is no ambiguity.
Obtaining a spread in this way is an example of the technique known as \emph{field reduction}. See \cite{LaVdV20??} for a detailed exposition of this.

The following lemma is trivial, but we write it out explicitly for future reference.
\begin{lemma}
\label{lem:belprop}
The following are equivalent:
\begin{enumerate}
\item
$(\D_{r,n,q},U,W)$ is a BEL-configuration in $V(rn,q)$;
\item
$\cB(U) \cap \cB(W) = \emptyset$;
\item
$U \cap \tilde{\cB}(W) = \emptyset$;
\item
$W \cap \tilde{\cB}(U) = \emptyset$;
\item
No element of $U^{\times}$ is an $\Fqn$-multiple of an element of $W^{\times}$;
\end{enumerate}
\end{lemma}

We can embed $V(rn,q)$ in $V(rn+n,q)$ in a natural way, that is by letting $V(rn+n,q) = \{(v_0,v_1,\ldots,v_r):v_i \in \Fqn\}$, and taking $V(rn,q)$ to be the subspace of vectors with $v_0=0$. The spread $\D_{r,n,q}$ naturally extends to the Desarguesian spread $\D_{r+1,n,q}$ of $V(rn+n,q)$.

In \cite{BEL2007}, the following was shown.
\begin{theorem}[\emph{BEL-construction}]
\label{thm:BEL}
Let $(\D,U,W)$ be a BEL-configuration in $V(rn,q)\leq V(rn+n,q)$. Choose an element $v$ of $V(rn+n,q)\backslash V(rn,q)$.

Then the set
\[
\cS(\D,U,W) := \left\{\frac{\langle \cB(z),W \rangle}{W} : z \in \langle v,U \rangle\right\}
\] 
is a semifield spread in $\frac{V(rn+n,q)}{W}\cong V(2n,q)$.
\end{theorem}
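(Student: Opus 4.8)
The plan is to verify directly that $\cS(\D,U,W)$ is a spread of the quotient space $\frac{V(rn+n,q)}{W}$, and then to exhibit the group action making it a semifield spread. First I would confirm that the ambient quotient has the right dimension: since $\dim W = rn - n$ and the total space has dimension $rn+n$, we have $\dim \frac{V(rn+n,q)}{W} = 2n$, so the quotient is indeed isomorphic to $V(2n,q)$. Each candidate spread element $\frac{\langle \cB(z),W\rangle}{W}$ has dimension $n$ provided $\cB(z) \cap W = \{0\}$, which gives $\dim\langle\cB(z),W\rangle = rn-n+n = rn$ and hence the quotient has dimension $n$. So the first substantive step is to check this transversality: for $z \in \langle v,U\rangle$ with $z \notin W$, I must show $\cB(z)\cap W=\{0\}$.

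**The transversality and partition arguments.** For $z\in\langle v,U\rangle^{\times}$, write $z = \lambda v + u$ with $u\in U$, $\lambda\in\Fqn$. When $\lambda \ne 0$ the vector $z$ has nonzero $v_0$-component (since $v\notin V(rn,q)$ and $U\subseteq V(rn,q)$), so $\cB(z)$ lies outside $V(rn,q)\supseteq W$ except at $0$, giving $\cB(z)\cap W=\{0\}$ immediately. When $\lambda=0$ we have $z=u\in U^{\times}$, and here I invoke the BEL property: by Lemma~\ref{lem:belprop}, condition (4), $W\cap\tilde{\cB}(U)=\emptyset$, which says precisely that no nonzero element of $\cB(u)$ lies in $W$, i.e. $\cB(u)\cap W=\{0\}$. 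Thus every element listed is genuinely $n$-dimensional in the quotient. Next I would establish the partition/pairwise-trivial-intersection property. The key observation is that $\langle v,U\rangle$ is an $(n+1)$-dimensional $\Fq$-space, and the map $z\mapsto \frac{\langle\cB(z),W\rangle}{W}$ is constant on the Desarguesian spread elements $\cB(z)$ (since $\cB(\alpha z)=\cB(z)$ for $\alpha\in\Fqn^{\times}$). The distinct spread elements of $\D$ meeting $\langle v,U\rangle$ are therefore indexed by the projective points of $\PG(\langle v,U\rangle)$ over $\Fqn$ that correspond to $\Fqn$-lines through the origin inside $\langle v,U\rangle$; I would count these and verify that the corresponding $n$-spaces in the $2n$-dimensional quotient are mutually skew and cover all nonzero vectors, yielding exactly $q^n+1$ spread elements as required for an $n$-spread in $V(2n,q)$.

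**The semifield (transitive group) structure.** To upgrade from "spread" to "semifield spread," I would produce the special element $T$ and the group $G\le\GammaL(2n,q)$ fixing $T$ pointwise and acting transitively on the remaining elements, as demanded by the definition of semifield spread in the excerpt. The natural candidate for the special element is $\frac{\langle\cB(v),W\rangle}{W}$ (corresponding to $\lambda\ne0$, $u=0$), playing the role of $A_\infty$. The transitive group should come from the $\Fqn$-scalar action of the Desarguesian spread: scalar multiplications $\mu\in\Fqn^{\times}$ act on $V(rn+n,q)$ and permute the spread elements $\cB(z)$, and I would restrict attention to those that stabilize $W$ and $\langle v,U\rangle$ setwise to get induced maps on the quotient. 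The crucial point is that these induced maps fix the special element and act transitively on the $U$-indexed elements $\{\frac{\langle\cB(v+u),W\rangle}{W}:u\in U\}$.

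**Main obstacle.** I expect the main difficulty to be the semifield/transitivity step rather than the spread verification. The spread property is essentially a transversality-and-counting argument that follows cleanly from the BEL condition via Lemma~\ref{lem:belprop}. By contrast, exhibiting the stabilizing-and-transitive subgroup $G$ requires carefully choosing the right semilinear maps: one must find collineations of the Desarguesian spread that simultaneously fix $W$ pointwise (or fix its image) and move the $U$-part transitively while respecting the field-reduction structure, and then check that the induced action on the $2n$-dimensional quotient genuinely lands in $\GammaL(2n,q)$ and fixes the chosen $T$ pointwise. Since this theorem is quoted from \cite{BEL2007}, the honest statement is that the spread property is straightforward to re-derive from the lemma, whereas the semifield property is where the real content of the original construction lies, and I would lean on the cited source for the explicit verification that such a group $G$ exists.
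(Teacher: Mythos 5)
Your spread-verification half is sound: the transversality argument (splitting $z=\lambda v+u$ into the case $\lambda\ne 0$, handled by the $v_0$-coordinate, and $\lambda=0$, handled by condition (4) of Lemma \ref{lem:belprop}) is correct. But the real content of the theorem is the word \emph{semifield}, and there your proposal both defers to the cited source and, where it does sketch an argument, goes wrong in two concrete ways. First, the special element is misidentified: the element that must be fixed pointwise is $V(rn,q)/W$, which is the \emph{common} image of all $\cB(u)$ with $u\in U^{\times}$ (indeed $\langle \cB(u),W\rangle$ has dimension $n+(rn-n)=rn$ and lies inside $V(rn,q)$, so it equals $V(rn,q)$); under the paper's isomorphism $\theta$ this is $A_\infty$. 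Your candidate $\langle\cB(v),W\rangle/W$ is an ordinary spread element, since $T_g(\cB(v))=\{(y,0):y\in\Fqn\}=A_0$. Second, the proposed group cannot work: scalar multiplication by $\mu\in\Fqn^{\times}$ fixes every element of $\D$ setwise (each $\cB(z)$ is an $\Fqn$-line), so any scalar that does stabilize $W$ induces a map on the quotient fixing \emph{every} spread element setwise; such maps can never be transitive on the $q^n$ non-special elements. The group one actually needs is $\{\phi_u : u=(u_1,\ldots,u_r)\in U\}$ where $\phi_u(x_0,x_1,\ldots,x_r)=(x_0,x_1+x_0u_1,\ldots,x_r+x_0u_r)$: each $\phi_u$ is $\Fqn$-linear, fixes $V(rn,q)$ (hence $W$) pointwise, and maps $\cB(v+u')$ to $\cB(v+u'+u)$, so the induced maps fix $V(rn,q)/W$ pointwise and act regularly on the remaining elements. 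Note also that your ``counting'' step for mutual skewness is not mere counting: if $\langle\cB(v+u),W\rangle$ and $\langle\cB(v+u'),W\rangle$ shared a point outside $W$, comparing $v_0$-coordinates forces a nonzero vector $\alpha(u-u')\in W\cap\tilde{\cB}(U)$, so the BEL condition is used here a second time.

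For comparison, the paper itself never proves Theorem \ref{thm:BEL} geometrically: the statement is quoted from \cite{BEL2007}, and the paper's own self-contained proof appears later and is algebraic. Lemma \ref{lem:Uf} writes $U=U_f$, $W=W_g$; Theorem \ref{thm:fgmult} pushes the construction through $\theta$ and identifies its image with $\cS(\SSS_{f,g})$ for the explicit multiplication $\SSS_{f,g}(x,y)=\sum_i g_i(f_i(x)y)$; and the subsequent Remark shows the BEL condition forbids zero divisors (if $\sum_i g_i(f_i(x)y)=0$ with $x,y\ne 0$, then $(f_1(x),\ldots,f_r(x))\in\tilde{\cB}(W_g)\cap U_f$, contradicting Lemma \ref{lem:belprop}). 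The semifield-spread property then comes for free from the standard presemifield-to-spread correspondence recalled in the introduction, with no need to construct the transitive group by hand. So your plan is a genuinely different, geometric route---and it can be completed with the corrections above---but as written the decisive step is either missing or incorrect.
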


\begin{remark}
\label{rem:belop1}
It is clear from the definition, as was shown in \cite{BEL2007}, that for any $\phi \in \GammaL(rn,q)$, the semifield spread $\cS(\D^{\phi},U^{\phi},W^{\phi})$ is equivalent to $\cS(\D,U,W)$. 

If $\D=\D_{r,n,q}$, then the group $\GammaL(r,q^n)$, embedded naturally in $\GammaL(rn,q)$, stabilizes $\D$, see e.g. \cite{LaVdV20??}. Hence  (\cite{BEL2007}, Theorem 2.4), if $\phi \in \GammaL(r,q^n)$, then $(\D,U,W)$ is a BEL-configuration if and only if $(\D,U^{\phi},W^{\phi})$ is a BEL-configuration, the spreads $\cS(\D,U,W)$ and $\cS(\D,U^{\phi},W^{\phi})$ are equivalent, and the corresponding semifields are isotopic.
So if we fix the Desarguesian spread $\D=\D_{r,n,q}$, we can define equivalence classes 
\begin{eqnarray}\label{eqn:equiv_classes}
[U,W] := \{(U^{\phi},W^{\phi}):\phi \in \GammaL(r,q^n)\}.
\end{eqnarray}
We will return to these classes in the subsequent sections.
\end{remark}

\begin{definition}
\label{def:BEL-equivalence}
Two BEL-configurations $(\D,U,W)$ and $(\D',U',W')$ are called {\it equivalent}, notation
$(\D,U,W)\equiv (\D',U',W')$, if and only if the spreads $\cS(\D,U,W)$ and $\cS(\D',U',W')$ are equivalent.
\end{definition}

We will sometimes denote this spread $\cS(\D,U,W)$ simply by $\cS(U,W)$. Now we will show how to find a semifield multiplication which gives a spread equivalent to $\cS(U,W)$. Such an explicit multiplication was only obtained in the case $r=n$ in \cite[ Theorem 4.1]{BEL2007}, and this was done in order to prove that every semifield can be constructed from a BEL-configuration in $V(n^2,q)$.
First we find a convenient way to express $U$ and $W$.

\begin{lemma}\label{lem:Uf}
Any subspace $U$ of $V(rn,q)$ of dimension $n$ is equal to
\begin{eqnarray}\label{eqn:U_f}
U_f := \{(f_1(x),f_2(x),\ldots,f_r(x)):x \in \Fqn\}
\end{eqnarray}
for some $r$-tuple $f = (f_1,f_2,\ldots,f_r)$ of $\Fq$-endomorphisms of $\Fqn$.

Any subspace $W$ of $V(rn,q)$ of dimension $rn-n$ is equal to
\begin{eqnarray}\label{eqn:W_g}
W_g := \{(x_1,x_2,\ldots,x_r):x_i \in \Fqn \mid \sum_{i=1}^r g_i(x_i)=0\}.
\end{eqnarray}
for some $r$-tuple $g = (g_1,g_2,\ldots,g_r)$ of $\Fq$-endomorphisms of $\Fqn$.
\end{lemma}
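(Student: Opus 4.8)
The plan is to prove the two statements separately, in each case exhibiting the required endomorphisms explicitly from a basis of the subspace. For the claim about $U$, the key observation is that $U$ has dimension $n$ over $\Fq$, which is exactly the dimension of $\Fqn$ as an $\Fq$-vector space. First I would fix an $\Fq$-linear isomorphism $\Fqn \to U$; composing this with the $r$ coordinate projections $\pi_i : V(rn,q) \to \Fqn$, $(v_1,\ldots,v_r)\mapsto v_i$, produces $\Fq$-endomorphisms $f_1,\ldots,f_r$ of $\Fqn$. Then for $x \in \Fqn$ the image of $x$ under the isomorphism is precisely $(f_1(x),\ldots,f_r(x))$, so $U = U_f$ by construction. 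The only subtlety is justifying that such an isomorphism exists, which is immediate since $\dim_{\Fq} U = n = \dim_{\Fq}\Fqn$; every finite-dimensional $\Fq$-vector space of dimension $n$ is $\Fq$-isomorphic to $\Fqn$.

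For the claim about $W$, I would work dually. A subspace of dimension $rn-n$, i.e. of codimension $n$, in $V(rn,q)$ is the kernel of a surjective $\Fq$-linear map $\Phi : V(rn,q) \to \Fqn$ (again using that the $n$-dimensional quotient is $\Fq$-isomorphic to $\Fqn$). Writing $\Phi$ in terms of its restrictions to the $r$ coordinate summands, there exist $\Fq$-endomorphisms $g_1,\ldots,g_r$ of $\Fqn$ with $\Phi(x_1,\ldots,x_r) = \sum_{i=1}^r g_i(x_i)$. Hence $W = \ker\Phi = W_g$ as claimed. Here surjectivity of $\Phi$ is what guarantees the image has full dimension $n$, matching the codimension of $W$; this is the one point where I would check that the construction yields exactly the prescribed dimension rather than something smaller.

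I do not expect any serious obstacle: both parts rest on the coincidence $\dim_{\Fq}\Fqn = n$ together with the freedom to realise any $\Fq$-linear map between $\Fqn$ and itself as a $q$-polynomial $\sum_i f_i x^{q^i}$, a fact already recalled in the excerpt. The mildly delicate point is simply bookkeeping: for $U$ one uses an isomorphism onto the subspace and reads off coordinates, while for $W$ one uses a linear functional valued in $\Fqn$ and reads off its components. I would present the $U$-case in full and remark that the $W$-case follows by the same argument applied to the dual (kernel) description, since the two are essentially transposes of one another.
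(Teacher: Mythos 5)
Your proposal is correct and follows essentially the same argument as the paper's own proof: $U$ is realised as the image of an injective $\Fq$-linear map from $\Fqn$ (with the $f_i$ read off as coordinate projections), and $W$ as the kernel of a surjective $\Fq$-linear map onto $\Fqn$ (with the $g_i$ read off as its restrictions to the coordinate summands). The dimension-counting justifications you flag as the only delicate points are exactly the ones the paper uses.
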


\begin{proof}
As $\dim_{\Fq}(U)= \dim_{\Fq}(\Fqn)=n$, $U$ and $\Fqn$ are isomorphic as $\Fq$-vector spaces, and hence $U$ is the image of some injective $\Fq$-linear map, say $F$, from $\Fqn$ into $V(rn,q)$. Let $e_i$ be the element of $(\Fqn)^r$ with $1$ in the $i$-th position and zeroes everywhere else, and let $V_i = \{\alpha e_i:\alpha \in \Fqn\}$. Then if we let $f_i(x)e_i$ denote the projection of $F(x)$ onto $V_i$, i.e. $F(x) = \sum_i f_i(x)e_i$, then $U = U_f$, as claimed.

Now $\dim_{\Fq}(W)= rn-n$, and so $W$ is the kernel of some surjective map $G$ from $V(rn,q)$ onto $\Fqn$. If we define $g_i:\Fqn \rightarrow \Fqn:x \mapsto G(xe_i)$, then for $v = \sum_i x_i e_i$, we obtain 
$G(v) =\sum_i G(x_i e_i)= \sum_i g_i(x_i)$, and $W=W_g$, as claimed.
\end{proof}

Note that if we denote $fA = (f_1A,f_2A,\ldots,f_rA)$ and $Bg = (Bg_1,Bg_2,\ldots,Bg_r)$ for any $A,B \in GL(n,\Fq)$, then clearly $U_{fA} = U_f$ and $W_{Bg} = W_g$. Note also that for any $r$-tuple $f$ (resp. $g$) of endomorphisms,  $\dim(U_f)=n$ (resp. $\dim(W_g)=rn-n$) if and only if the map $F$ (resp. $G$) as defined in the proof of Lemma \ref{lem:Uf}
 is injective (resp. surjective).

One of the steps in the BEL-construction consists of taking the quotient space of $W$ in $V(rn+n,q)$, where we again assume $V(rn,q)$ is embedded in $V(rn+n,q)$ as before.
To that extent we consider the following homomorphism $T_g$ which will have kernel $W_g$.
Given an $r$-tuple of endomorphisms $g = (g_1,g_2,\ldots,g_r)$, define a map $T_g : V(rn+n,q) \rightarrow V(2n,q)$ by
\[
T_g(x_0,x_1,\ldots,x_r) = (x_0,\sum_{i=1}^r g_i(x_i)).
\]
Clearly $W_g = \ker(T_g)$, and $\im(T_g) = V(2n,q)$. By the First Isomorphism Theorem for vector spaces, we have
$\frac{V(rn+n,q)}{W_g} \cong V(2n,q)$, and we denote this isomorphism by $\theta$:
\begin{equation}
\label{eqn:Tgisom}
\theta~:~\frac{V(rn+n,q)}{W_g} \rightarrow V(2n,q)~:~ v + W_g \mapsto T_g(v)
\end{equation}

\begin{theorem}
\label{thm:fgmult}
Suppose $(\D_{r,n,q},U_f,W_g)$ is a BEL-configuration in $V(rn,q)$, for some $r$-tuples of endomorphisms $f = (f_1,\ldots,f_r)$, and $g = (g_1,\ldots,g_r)$.
The semifield spread $\cS(\D_{r,n,q},U_f,W_g)$ is equivalent to $\D(\SSS_{f,g})$, where $\SSS_{f,g}$ is defined as the (pre)semifield with multiplication
\begin{eqnarray}
\label{eqn:sfgmult}
\SSS_{f,g}(x,y) := \sum_{i=1}^r g_i(f_i(x)y).
\end{eqnarray}

\end{theorem}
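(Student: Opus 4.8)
The plan is to unwind the three geometric operations in the BEL-construction---forming $\langle v, U\rangle$, intersecting $\cB(z)$ with the configuration, and passing to the quotient by $W$---and to track a generic element through each step until an explicit multiplication emerges. Concretely, I would fix the extra coordinate by choosing $v = e_0 = (1,0,\ldots,0)$ in $V(rn+n,q)$, so that a typical element $z \in \langle v, U_f\rangle$ has the form $z = (y, f_1(x), f_2(x), \ldots, f_r(x))$ for $x,y \in \Fqn$. The spread element of $\cS(\D,U_f,W_g)$ indexed by such a $z$ is $\langle \cB(z), W_g\rangle / W_g$, and I would compute its image under the isomorphism $\theta$ of (\ref{eqn:Tgisom}).

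The key computation is to apply $T_g$ to the Desarguesian line $\cB(z) = \{(\alpha y, \alpha f_1(x), \ldots, \alpha f_r(x)) : \alpha \in \Fqn\}$. By the definition of $T_g$, each such point maps to $(\alpha y, \sum_{i=1}^r g_i(\alpha f_i(x)))$. Taking $\alpha = 1$ already exhibits the vector $(y, \sum_i g_i(f_i(x)))$, and since $\langle \cB(z), W_g\rangle/W_g$ is spanned (over $\Fq$) by the images of the $\cB(z)$-points together with $W_g$, which $T_g$ kills, the image $\theta(\langle \cB(z),W_g\rangle/W_g)$ is the $\Fq$-span of $\{(\alpha y, \sum_i g_i(\alpha f_i(x))) : \alpha \in \Fqn\}$ inside $V(2n,q)$. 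I would then argue this is exactly the $n$-dimensional subspace $\{(\alpha, \SSS_{f,g}(\alpha,\cdot))\}$ shape after suitably re-parametrising: fixing $y$ and letting $\alpha$ range recovers a graph over the first coordinate, and comparing with the spread $\cS(\SSS)$ from (\ref{def:semspread}) via $A_y = \{(x, \SSS(x,y))\}$ identifies the multiplication as $\SSS_{f,g}(x,y) = \sum_{i=1}^r g_i(f_i(x)y)$ after absorbing the scalar $\alpha$. The $\Fqn$-linearity of $\cB(z)$ in $\alpha$ is what forces the term $f_i(x)y$ rather than merely $f_i(x)$, since it is the variable scalar $\alpha$ (playing the role of the left argument) multiplied against the fixed data $f_i(x)$ (the right argument) that produces the bilinear-looking expression.

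I would also need to confirm that $\SSS_{f,g}$ is genuinely a presemifield, i.e. has no zero divisors; this is where the BEL-property is essential and where I expect to invoke Lemma \ref{lem:belprop}. The map $y \mapsto \SSS_{f,g}(x,y)$ for fixed nonzero $x$ must be nonsingular, equivalently $\sum_i g_i(f_i(x)y) = 0$ for all components forces $y=0$; failure of this would mean some nonzero $(f_1(x)y,\ldots,f_r(x)y) \in W_g$, that is, an $\Fqn$-multiple of the element $(f_1(x),\ldots,f_r(x)) \in U_f^\times$ lying in $W_g$, contradicting condition (5) of Lemma \ref{lem:belprop}. Checking nonsingularity in the other argument ($x \mapsto \SSS_{f,g}(x,y)$) is the symmetric statement obtained from the same BEL-condition applied through the injectivity of $F$.

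The main obstacle I anticipate is bookkeeping rather than conceptual: correctly identifying which coordinate of $V(2n,q)$ plays the role of the ``$x$'' and which the ``$y$'' in the standard spread $\cS(\SSS)$, and verifying that the $\Fq$-span computed from the Desarguesian line is precisely the claimed graph subspace $A_y$ and not something larger. I would resolve this by a dimension count---each $\langle\cB(z),W_g\rangle/W_g$ is forced to be $n$-dimensional because $\cS(\D,U,W)$ is already known to be a spread by Theorem \ref{thm:BEL}, so it suffices to show the explicit vector $(y,\sum_i g_i(f_i(x)y))$ lies in it and then match parametrisations. Since Theorem \ref{thm:BEL} guarantees we have a semifield spread a priori, I do not need to re-prove the spread axioms from scratch; I only need the equivalence with $\D(\SSS_{f,g})$, which reduces to checking the two spreads share the special element $A_\infty$ (the image of $\langle v,U_f\rangle$ with the ``$U$-part'' suppressed) and the same family of graphs, completing the identification.
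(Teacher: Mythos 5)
Your overall strategy is the same as the paper's: take $v=(1,0,\ldots,0)$, push each $\cB(z)$ through $T_g$ (i.e.\ through the isomorphism $\theta$ of (\ref{eqn:Tgisom})), recognise the images as graphs of linear maps, and invoke condition (5) of Lemma \ref{lem:belprop} for the absence of zero divisors (your zero-divisor argument is exactly the one the paper gives in the remark following the theorem). However, there is a concrete error in your setup. The span $\langle v,U_f\rangle$ in Theorem \ref{thm:BEL} is an $\Fq$-subspace of dimension $n+1$, so its typical element is $\lambda v+u=(\lambda,f_1(x),\ldots,f_r(x))$ with $\lambda\in\Fq$, \emph{not} $(y,f_1(x),\ldots,f_r(x))$ with $y\in\Fqn$ as you claim. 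Since $\cB(\lambda v+u)=\cB(v+\lambda^{-1}u)$ for $\lambda\in\Fq^{\times}$ and $\lambda^{-1}u\in U_f$, the correct normalisation is $z=v+u$ or $z=u$ with $u\in U_f$, which is what the paper does. Your version is not harmless: for $y\in\Fqn\setminus\Fq$, the subspace $T_g(\cB(z))$ with $z=(y,f_1(x),\ldots,f_r(x))$ is the graph of $\beta\mapsto\sum_i g_i\left((f_i(x)y^{-1})\beta\right)$, and the tuple $(f_1(x)y^{-1},\ldots,f_r(x)y^{-1})$ lies in $\tilde{\cB}(U_f)$ but in general \emph{not} in $U_f$, because $U_f$ is only $\Fq$-linear. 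So ``absorbing the scalar'' produces subspaces that are not of the form $\{(t,\SSS_{f,g}(x',t)):t\in\Fqn\}$ for any $x'$; taken literally, your family has more than $q^n+1$ members in general and cannot be the spread $\cS(\D_{r,n,q},U_f,W_g)$. The repair is exactly the restriction $\lambda\in\Fq$: then $f_i(x)\lambda^{-1}=f_i(x\lambda^{-1})$ by $\Fq$-linearity of $f_i$, and the family collapses to the correct $q^n+1$ subspaces, giving the paper's proof.

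A smaller bookkeeping point, since you raised it explicitly: in the graphs $\{(\alpha,\sum_i g_i(f_i(x)\alpha)):\alpha\in\Fqn\}$ the varying scalar $\alpha$ occupies the \emph{right}-argument slot of $\SSS_{f,g}$ and the fixed index $x$ the \emph{left} — the opposite of your parenthetical assignment. These are therefore graphs of the left multiplications $L_x$, and matching them literally against (\ref{def:semspread}) (whose elements are graphs of right multiplications $R_y$) identifies the constructed spread with $\cS(\SSS_{f,g}^d)$ rather than $\cS(\SSS_{f,g})$. The paper's own proof elides this same dual/convention issue, so it is not held against you, but the justification as you wrote it is inverted and should not be leaned on.
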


\begin{proof}

Put $v = (1,0,\ldots,0)\in V(rn+n,q)\setminus V(rn,q)$. Using the isomorphism from (\ref{eqn:Tgisom}), we have that for all $z\in \langle v,U_f\rangle$, 
\[
\left ( \frac{\langle \cB(z),W_g \rangle}{W_g} \right )^\theta =  T_g(\cB(z)),
\]
and
\[
\cS(U_f,W_g) \simeq \left\{T_g(\cB(z)\rangle) : z \in \langle v,U_f \rangle\right\}.
\]

 Suppose first that $z \in \langle v, U_f\rangle \backslash U_f$. Then we may take $z = v+u$ for some unique $u \in U_f$. Hence $z  = (1,f_1(x),\ldots,f_r(x))$ for some $x \in \Fqn$, and so
\[
\cB(v+u) = \{(y,f_1(x)y,\ldots,f_r(x)y):y \in \Fqn\}.
\]
Now if $z =u \in U_f$, then 
\[
\cB(u) = \{(0,f_1(x)y,\ldots,f_r(x)y):y \in \Fqn\}.
\]
Hence, for $u \in U_f$ 
\begin{align*}
T_g(\cB(u+v)) &= \{(y,\sum_{i=1}^r g_i(f_i(x)y):y \in \Fqn\}\\
T_g(\cB(u)) &= \{(0,\sum_{i=1}^r g_i(f_i(x)y):y \in \Fqn\}.
\end{align*}

By Theorem \ref{thm:BEL}, this gives a semifield spread. We can see that this is precisely equal to the spread $\cS(\SSS_{f,g})$, where $\SSS_{f,g}$ is as defined in (\ref{eqn:sfgmult}), and $\cS(\SSS_{f,g})$ is as defined in (\ref{def:semspread}).
\end{proof}

\begin{example}
Let $f = (1,c^{1/\beta}\alpha/\beta)$, $g=(1,-\beta)$. Then
\[
\SSS_{f,g}(x,y) = xy-((c^{1/\beta}x^{\alpha/\beta})y)^\beta = xy-cx^{\alpha}y^{\beta},
\]
so $\SSS_{f,g} = \GTF_{c,\alpha,\beta}$, and hence $\cS(\D_{r,n,q},U_f,W_g)$ is equivalent to  $\cS(\mathrm{GTF}_{c,\alpha,\beta})$.
\end{example}

\begin{remark}
Recall from (\ref{eqn:rightmult2}) that every (pre)semifield multiplication can be written as
\[
\SSS(x,y) = \sum_{i = 0}^{n-1} l_i(x)y^{q^i},
\]
for some $l_i$. Then taking $g_i(z)=z^{q^i}$, and $f_i(x) = l_i(x)^{q^{-i}}$, we get that
\[
\SSS_{f,g}(x,y) = \SSS(x,y),
\]
and so $\cS(\SSS) \simeq \cS(\D_{r,n,q},U_f,W_g)$.

The triple $(\D_{r,n,q},U_f,W_g)$, with these particular $n$-tuples of endomorphisms $f_i$ and $g_i$, is precisely the BEL-configuration which was used in \cite{BEL2007}, Theorem 4.1, to show that every (pre)semifield can be constructed from some BEL-configuration in $V(n^2,q)$. So for $r=n$, the approach using the representation of the subspaces as $U_f$ and $W_g$, is the same as in the proof of \cite[Theorem 4.1]{BEL2007}. But the advantage of the viewpoint taken here is that this explicit multiplication can not only be obtained for $r=n$, but for any $r\geq 2$.
\end{remark}

\begin{remark}
As pointed out in the proof of Theorem \ref{thm:fgmult}, it follows from the BEL-construction (Theorem \ref{thm:BEL}) that $\cS(\D_{r,n,q},U_f,W_g)$ forms a semifield spread, and this was proven geometrically in \cite{BEL2007}.
It then follows that $\SSS_{f,g}$ is a (pre)semifield, with multiplication defined in (\ref{eqn:sfgmult}).
However, with the explicit descriptions of the subspaces $U_f$ and $W_g$, defined by the $r$-tuples of endomorphisms $f_i$ and $g_i$, we are able to prove directly that the multiplication (\ref{eqn:sfgmult}) 
defines a presemifield. 
\begin{proof}
It is clear that this multiplication is bilinear in $x$ and $y$ over $\Fq$, and hence it only remains to show that $\SSS_{f,g}$ has no zero divisors. Suppose that $\sum_{i=1}^r g_i(f_i(x)y)=0$ for some $x,y \ne 0$. Then $(f_1(x)y,\ldots,f_r(x)y)\in W_g$. Hence $(f_1(x),\ldots,f_r(x))\in \tilde{\cB}(W_g) \cap U_f$, a contradiction on $(\D_{r,n,q},U_f,W_g)$ being a BEL-configuration by Lemma \ref{lem:belprop}.
\end{proof}
Consequently, this implies that $\cS(\SSS_{f,g})$ is a semifield spread.
This provides an alternative proof of the fact that the BEL-construction gives a semifield spread. 
\end{remark}

We know from \cite{BEL2007} that every presemifield can be constructed from a BEL-configuration in $V(n^2,q)$, and we have seen above that the generalized twisted fields can be constructed for $r=2$, and there are 
many other examples. For instance, whenever we have a multiplication where some $l_i(x)$ or $r_j(y)$, as defined in (\ref{eqn:rightmult2}), are zero, we can find a BEL-configuration for $r<n$, see also \cite[Remark 4.2]{BEL2007}.

The following lemma gives a geometric sufficient condition to obtain a BEL-configuration for smaller $r$.

\begin{theorem}
\label{lem:Wcontain}
Suppose $(\D,U,W)$ is a BEL-configuration in $V(rn,q)$. If $W$ contains an element of $\D$, then there exists a BEL-configuration $(\D',U',W')$ in $V((r-1)n,q)$ such that $\cS(\D,U,W) = \cS(\D',U',W')$.
\end{theorem}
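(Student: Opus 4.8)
The plan is to reduce everything to the coordinate descriptions of Lemma~\ref{lem:Uf} and then read the result off the explicit multiplication of Theorem~\ref{thm:fgmult}. Since all Desarguesian $n$-spreads are equivalent, I would first assume $\D = \D_{r,n,q}$, and then normalise the element $S \in \D$ contained in $W$. As $\GL(r,q^n) \le \GammaL(r,q^n)$ acts transitively on the nonzero vectors of $(\Fqn)^r$, it acts transitively on the elements of $\D$, so there is $\phi \in \GL(r,q^n)$ with $S^{\phi} = \cB(e_r) = V_r$. By Remark~\ref{rem:belop1}, $(\D,U^{\phi},W^{\phi})$ is again a BEL-configuration with $\cS(\D,U^{\phi},W^{\phi}) \simeq \cS(\D,U,W)$, so I may assume from the start that $V_r \subseteq W$ (here $r\ge 2$, since for $r=1$ the subspace $W$ has dimension $0$ and cannot contain an element of $\D$).

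Next I would pass to coordinates, writing $U = U_f$ and $W = W_g$ with $r$-tuples $f = (f_1,\dots,f_r)$ and $g = (g_1,\dots,g_r)$. The containment $V_r \subseteq W_g$ forces $g_r(\alpha)=0$ for all $\alpha$, i.e.\ $g_r = 0$. On the other hand $S = V_r$ meets $W$ nontrivially, so the BEL property $\cB(U)\cap\cB(W)=\emptyset$ gives $V_r \cap U_f = \{0\}$; since $x \mapsto (f_1(x),\dots,f_r(x))$ is injective, this is equivalent to injectivity of $F' : x \mapsto (f_1(x),\dots,f_{r-1}(x))$. I then set $f' = (f_1,\dots,f_{r-1})$, $g' = (g_1,\dots,g_{r-1})$, and work in $V((r-1)n,q) = (\Fqn)^{r-1}$ with $\D' = \D_{r-1,n,q}$, $U' = U_{f'}$, $W' = W_{g'}$. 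Injectivity of $F'$ gives $\dim U' = n$, and since $g_r=0$ the surjective map defining $W_g$ factors through $(x_1,\dots,x_{r-1})\mapsto \sum_{i=1}^{r-1} g_i(x_i)$, which is therefore still surjective, so $\dim W' = (r-1)n - n$.

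Finally I would compare multiplications. Because $g_r = 0$, the formula (\ref{eqn:sfgmult}) gives
\[
\SSS_{f',g'}(x,y) = \sum_{i=1}^{r-1} g_i(f_i(x)y) = \sum_{i=1}^{r} g_i(f_i(x)y) = \SSS_{f,g}(x,y),
\]
so the two presemifields coincide, and Theorem~\ref{thm:fgmult} then yields the chain $\cS(\D',U',W') \simeq \cS(\SSS_{f',g'}) = \cS(\SSS_{f,g}) \simeq \cS(\D,U,W)$, which is the asserted equality of spreads. The one point that needs care — and the main obstacle — is checking that $(\D',U',W')$ genuinely is a BEL-configuration, i.e.\ that $\cB(U')\cap\cB(W')=\emptyset$ in $V((r-1)n,q)$. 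I would handle this by running the argument of the Remark following Theorem~\ref{thm:fgmult} in reverse: given the already-verified dimensions $\dim U'=n$ and $\dim W'=(r-1)n-n$, that computation shows via condition~(5) of Lemma~\ref{lem:belprop} that the BEL property of $(\D',U',W')$ is \emph{equivalent} to $\SSS_{f',g'}$ having no zero divisors. But $\SSS_{f',g'} = \SSS_{f,g}$ is a presemifield, since $(\D,U,W)$ is a BEL-configuration, so it has no zero divisors, and the BEL property for the reduced triple follows. This simultaneously confirms that $\cS(\D',U',W')$ is a semifield spread and completes the proof.
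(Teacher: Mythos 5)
Your proof is correct and in substance the same as the paper's: both arguments coordinatize via Lemma~\ref{lem:Uf} and read the reduction off the explicit multiplication of Theorem~\ref{thm:fgmult}. The only real difference is how the spread element inside $W$ is handled. You normalize it first, using transitivity of $\GL(r,q^n)$ on $\D$ together with Remark~\ref{rem:belop1} to assume $V_r=\cB(e_r)\subseteq W$, which forces $g_r=0$ and makes the reduction literal coordinate deletion; the price is the extra equivalence coming from $\phi$, so your chain ends in $\cS(\D,U,W)\simeq\cS(\D',U',W')$ rather than the stated equality. The paper instead keeps a general $\cB(v)\leq W_g$, scales $v$ so that $v_k=1$, solves $g_k(x)=-\sum_{i\ne k}g_i(v_ix)$ and substitutes into (\ref{eqn:sfgmult}), obtaining $\SSS_{f,g}=\SSS_{f',g'}$ with $f'_i=f_i-v_if_k$ and $g'$ the deletion of $g_k$ --- in effect performing your change of basis implicitly inside the multiplication, without moving $U$ and $W$, so no group element is needed and equality of the presemifields is literal. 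Since spreads in different ambient spaces are in any case only compared through the identifications $\theta$ of (\ref{eqn:Tgisom}), this difference is cosmetic. On the other hand, your write-up is more complete than the paper's at the final step: you verify $\dim U'=n$ via $V_r\cap U_f=\{0\}$ (forced by the BEL property), $\dim W'=(r-1)n-n$ via surjectivity of the factored map, and the BEL property of the reduced triple via the equivalence, given those dimension counts, between condition (5) of Lemma~\ref{lem:belprop} and the absence of zero divisors in $\SSS_{f',g'}=\SSS_{f,g}$; the paper compresses all of this into the single sentence ``it follows that $(\D_{r-1,n,q},U_{f'},W_{g'})$ is a BEL-configuration'', and your reversal of the zero-divisor argument from the remark after Theorem~\ref{thm:fgmult} is exactly the justification that sentence needs.
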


\begin{proof}
Let $\D=\D_{r,n,q}$, $U=U_f$, $W=W_g$, so $\cS(\D,U,W) \simeq \cS(\SSS_{f,g})$. Suppose $\cB(v) \leq W_g$.
Then $\sum g_i(v_ix)=0$ for all $x\in \Fqn$. Without loss of generality, we may assume
 that $v_k =1$ for some $k\in \{1,\ldots, r\}$. Then $g_k(x) = -\sum_{i \ne k} g_i(v_ix)$ for all $x\in \Fqn$. So
\[
\SSS_{f,g}(x,y) = \sum_{i \ne k} g_i((f_i(x)-v_i f_k(x))y).
\]
Defining $f'_i(x) = f_i(x)-v_i f_k(x)$, and $g' = (g_1,\ldots,g_{k-1},g_{k+1},\ldots,g_r)$, $f' = (f'_1,\ldots,f'_{k-1},f'_{k+1},\ldots,f'_r)$, we get that $\SSS_{f,g}=\SSS_{f',g'}$, and hence $\cS(\SSS_{f,g})=\cS(\SSS_{f',g'})$. It follows that
$(\D_{r-1,n,q},U_{f'},W_{g'})$ is a BEL-configuration in $V((r-1)n,q)$ satisfying $\cS(\D,U,W) = \cS(\D_{r-1,n,q},U_{f'},W_{g'})$.
\end{proof}


%

\section{BEL-configurations and polarities}

In this section we will show how to find a BEL-configuration for the dual spread of $\cS(\D,U,W)$. As a corollary, we will get a nice description of symplectic semifields in the context of BEL-configurations.

Let $\rho$ be any polarity on $V(rn,q)$, and let $b_{\rho}$ denote its associated nondegenerate (symmetric bilinear, symplectic, or hermitian) form. For a spread $\D$, define the set $\D_{\rho}$ as follows:
\begin{eqnarray}\label{eqn:Drho}
\D_{\rho} = \{H^{\rho}: \dim(H) = rn-n \textrm{ and 
$H$ is generated by elements of $\D$}\}.
\end{eqnarray}
\begin{lemma}
\label{lem:Drho}
Let $\D$ be a Desarguesian spread in $V(rn,q)$, and $\rho$ any nondegenerate polarity. Then $\D_{\rho}$ is again a Desarguesian spread.
\end{lemma}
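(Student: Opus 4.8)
The plan is to reinterpret everything through the $\Fqn$-structure underlying the field reduction. Write $V = V(rn,q) = (\Fqn)^r$ and let $\mathcal{F} = \{m_\lambda : \lambda \in \Fqn\} \subseteq \End_{\Fq}(V)$ be the field of scalar multiplications $m_\lambda : v \mapsto \lambda v$, so that $\mathcal{F} \cong \Fqn$ and $\D = \D_{r,n,q}$ is precisely the set of $1$-dimensional $\mathcal{F}$-subspaces of $V$. First I would observe that a subspace $H$ is generated by elements of $\D$ if and only if it is an $\mathcal{F}$-subspace (the $\Fq$-span of $\mathcal{F}$-subspaces is again $\mathcal{F}$-closed), so the subspaces $H$ appearing in (\ref{eqn:Drho}) are exactly the $\mathcal{F}$-subspaces of $\Fq$-dimension $rn-n$, i.e. the $\mathcal{F}$-hyperplanes, equivalently the hyperplanes of $\PG(r-1,q^n)$. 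There are $(q^{rn}-1)/(q^n-1)$ of them, which is exactly the number of elements of an $n$-spread of $V$.

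Next I would record the elementary spread properties. Since $\rho$ reverses inclusions and interchanges dimensions $d$ and $rn-d$, the map $H \mapsto H^{\rho}$ is injective and each $H^{\rho}$ has $\Fq$-dimension $n$. For two distinct hyperplanes we have $H_1 + H_2 = V$, whence $H_1^{\rho} \cap H_2^{\rho} = (H_1+H_2)^{\rho} = V^{\rho} = \{0\}$; together with the count above this already shows $\D_{\rho}$ is an $n$-spread, since $(q^{rn}-1)/(q^n-1)$ pairwise trivially intersecting $n$-subspaces must partition $V^{\times}$.

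The real content is showing $\D_{\rho}$ is \emph{Desarguesian}, and for this I would produce a second $\Fqn$-structure from the polarity itself. Let $b = b_{\rho}$ and, for $\lambda \in \Fqn$, let $m_\lambda^{*}$ denote the adjoint of $m_\lambda$ with respect to $b$, defined by $b(m_\lambda v, w) = b(v, m_\lambda^{*} w)$. Using nondegeneracy of $b$ and the identities $(m_\lambda m_\mu)^{*} = m_\mu^{*} m_\lambda^{*}$ and $(m_\lambda + m_\mu)^{*} = m_\lambda^{*} + m_\mu^{*}$, together with commutativity of $\mathcal{F}$, I would check that $\lambda \mapsto m_\lambda^{*}$ is an injective homomorphism, so that $\mathcal{F}^{*} := \{m_\lambda^{*} : \lambda \in \Fqn\}$ is again a field isomorphic to $\Fqn$, consisting of $\Fq$-linear maps, and therefore endows $V$ with a new $\Fqn$-structure (of $\mathcal{F}^{*}$-dimension $r$). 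The key computation is then that each $H^{\rho}$ is $\mathcal{F}^{*}$-invariant: if $w \in H^{\rho}$ and $h \in H$ then $b(h, m_\lambda^{*} w) = b(m_\lambda h, w) = 0$, because $H$ is $\mathcal{F}$-invariant, so $m_\lambda^{*} w \in H^{\rho}$. Hence every element of $\D_{\rho}$ is a $1$-dimensional $\mathcal{F}^{*}$-subspace, and by the cardinality count $\D_{\rho}$ is exactly the full set of such subspaces, i.e. the Desarguesian spread obtained by field reduction with respect to $\mathcal{F}^{*}$.

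I expect the main obstacle to be precisely this last step: one must exhibit the new field $\mathcal{F}^{*}$ and identify $\D_{\rho}$ with its field-reduction spread, rather than merely verifying the spread axioms by counting. A secondary technical point is to carry out the adjoint computation uniformly for all three admissible form types (symmetric bilinear, symplectic, and hermitian); in the hermitian case $b$ is $\tau$-sesquilinear, so I would verify separately that $m_\lambda^{*}$ is still $\Fq$-linear and that $\lambda \mapsto m_\lambda^{*}$ remains additive and multiplicative, which holds because the companion automorphism enters both sides of each defining identity symmetrically and cancels. Finally I would invoke the standard fact (see e.g. \cite{LaVdV20??}) that the field-reduction spread associated with any $\Fqn$-structure on $V$ is Desarguesian to conclude.
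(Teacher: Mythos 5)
Your proof is correct, and it takes a genuinely different route from the paper's. The paper verifies the spread property much as you do (via $\langle H_1,H_2\rangle = V(rn,q)$ and duality), but then certifies the Desarguesian property by a case split: for $r>2$ it invokes the characterisation that a spread is Desarguesian if and only if every subspace spanned by spread elements is partitioned by spread elements, and checks this for $\D_{\rho}$ by counting the $(rn-n)$-dimensional subspaces $H_i$ spanned by elements of $\D$ that pass through $S^{\rho}$; for $r=2$, where that criterion is vacuous (any two distinct spread elements already span the whole space), it instead appeals to semifield theory, namely that the dual of $\cS(\SSS)$ corresponds to $\SSS^t$ and that the transpose of the field is again the field. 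Your argument avoids both ingredients: by introducing the adjoint field $\mathcal{F}^{*}=\{m_{\lambda}^{*}:\lambda\in\Fqn\}$ and showing each $H^{\rho}$ is $\mathcal{F}^{*}$-invariant, you exhibit $\D_{\rho}$ directly as the field-reduction spread of a second $\Fqn$-structure on $V(rn,q)$, which meets the paper's definition of Desarguesian head-on, works uniformly for all $r\geq 2$ and all three form types, and needs no nontrivial characterisation theorem for Desarguesian spreads. The costs are the adjoint formalism (the check that $\lambda\mapsto m_{\lambda}^{*}$ is a field embedding, where commutativity of $\Fqn$ makes the anti-homomorphism property harmless, and the separate verification in the hermitian case) and the closing appeal to the standard fact that field reduction with respect to \emph{any} $\Fqn$-structure yields a Desarguesian spread, equivalently that all such structures are conjugate under $\GL(rn,q)$ --- a fact consistent with, and essentially implicit in, the paper's definition. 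In short: same elementary spread-property step, but a different and arguably more self-contained mechanism for certifying the Desarguesian property, at the price of more algebraic bookkeeping.
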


\begin{proof}
We assume without loss of generality that $\D=\D_{r,n,q}$. Then the elements of $\D$ correspond to $1$-dimensional $\Fqn$-subspaces of $(\Fqn)^r$, and so $kn$-dimensional subspaces generated by elements of $\D$ correspond to $k$-dimensional $\Fqn$-subspaces of $(\Fqn)^r$. For a formalized description of this ``field reduction'' see \cite{LaVdV20??}.

Let $H_1^{\rho}$, $H_2^{\rho}$ be distinct elements of $\D_{\rho}$. 
Any intersection of subspaces spanned by elements of a Desarguesian spread is partitioned by elements of that spread. Hence $\dim(\langle H_1,H_2 \rangle) = rn$, and so $\dim(H_1^{\rho}\cap H_2^{\rho}) = 0$, proving that $\D_{\rho}$ is a spread.

Suppose $r=2$. In this case $\D_{\rho} = \D^\rho=\{S^{\rho}:S \in \D\}$, and the proof easily follows from the
fact that the dual of a semifield spread of $V(2n,q)$ corresponds to the transpose of the semifield, and the Desarguesian spread corresponds to the field.

Suppose $r>2$. Then $\D_{\rho}$ is Desarguesian if and only if every subspace spanned by elements of $\D_{\rho}$ is partitioned by elements of $\D_{\rho}$. Let $S = \langle H_1^{\rho},\ldots,H_k^{\rho}\rangle$, with $\dim(S) = nk$. Then $S^{\rho} = \bigcap_{i=1..k}H_i$, and $\dim(S^{\rho}) = n(r-k)$. Now there are precisely $\frac{q^{nk}-1}{q^n-1}$ subspaces $H_i$ such that $S^{\rho} \leq H_i$ and $H_i$ is an $(rn-n)$-dimensional subspace spanned by elements of $\D$. But then $H_i^{\rho} \leq S$ for all $i$, and so the set $\{H_i^{\rho}:i \in {1, \ldots, \frac{q^{nk}-1}{q^n-1}}\}\subset \D_{\rho}$ partitions $S$, proving that $\D_{\rho}$ is Desarguesian.
\end{proof}

\begin{theorem}
\label{thm:BELpolarity}
Suppose $\rho$ is a nondegenerate polarity in $V(rn,q)$, and  $\D=\D_{r,n,q}$. Then
$(\D,U,W)$ is a BEL-configuration if and only if $(\D_{\rho},W^{\rho},U^{\rho})$ is a BEL-configuration.
\end{theorem}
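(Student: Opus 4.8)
The plan is to verify the three defining conditions of a BEL-configuration for $(\D_\rho,W^\rho,U^\rho)$. Two are immediate: since $\rho$ is a nondegenerate polarity, the map $X\mapsto X^\rho$ is an inclusion-reversing involution on $\Fq$-subspaces with $\dim X^\rho = rn-\dim X$, so $W^\rho$ has dimension $n$ and $U^\rho$ has dimension $rn-n$, matching the dimension requirements; and $\D_\rho$ is a Desarguesian spread by Lemma \ref{lem:Drho}. Thus everything reduces to the intersection condition $\cB_{\D_\rho}(W^\rho)\cap\cB_{\D_\rho}(U^\rho)=\emptyset$. I would first record that $\rho$ acts involutively here, in the sense that $(\D_\rho)_\rho=\D$, $(U^\rho)^\rho=U$ and $(W^\rho)^\rho=W$; granting this, it suffices to prove the single implication that $(\D,U,W)$ being a BEL-configuration forces $(\D_\rho,W^\rho,U^\rho)$ to be one, since applying that implication to the configuration $(\D_\rho,W^\rho,U^\rho)$ and the same $\rho$ returns the converse.

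For that implication I would argue by contrapositive. Suppose $(\D_\rho,W^\rho,U^\rho)$ is not a BEL-configuration, so by Lemma \ref{lem:belprop} some $m\in\D_\rho$ meets both $W^\rho$ and $U^\rho$ nontrivially. Write $m=H^\rho$, where $H$ is an $(rn-n)$-dimensional subspace generated by elements of $\D$; in particular $H$ is an $\Fqn$-subspace. Using $(A+B)^\rho=A^\rho\cap B^\rho$ together with dimension-reversal, the conditions $m\cap U^\rho\ne\{0\}$ and $m\cap W^\rho\ne\{0\}$ translate respectively into $\langle H,U\rangle\ne V(rn,q)$ and $\langle H,W\rangle\ne V(rn,q)$, i.e.\ into the dimension bounds $\dim(H\cap U)\ge 1$ and $\dim(H\cap W)\ge rn-2n+1$. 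From the first bound pick $0\ne u\in H\cap U$ and let $\ell_0=\cB(u)$ be the spread element through $u$; since $H$ is $\Fqn$-closed, $\ell_0\subseteq H$.

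The crux is then a single dimension count inside $H$: because $\ell_0\subseteq H$ we have $\ell_0\cap W=\ell_0\cap(H\cap W)$, and
\[
\dim(\ell_0\cap W)\ge \dim\ell_0+\dim(H\cap W)-\dim H\ge n+(rn-2n+1)-(rn-n)=1,
\]
so $\ell_0$ meets $W$ nontrivially as well. Hence $\ell_0\in\cB_\D(U)\cap\cB_\D(W)$, showing $(\D,U,W)$ is not a BEL-configuration, as required. I expect the main obstacle to be exactly the asymmetry between $U$ (dimension $n$) and $W$ (dimension $rn-n$): a naive attempt at the forward direction, enlarging a spread element that meets both $U$ and $W$ to an $\Fqn$-hyperplane, breaks down for $r>2$ because $W$ is not $\Fqn$-closed, whereas the contrapositive converts the strong span condition on $W$ into precisely the bound $\dim(H\cap W)\ge rn-2n+1$ that forces every spread element of $H$ to meet $W$. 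Finally, to justify $(\D_\rho)_\rho=\D$ I would observe that for $\ell\in\D$ the elements of $\D_\rho$ contained in $\ell^\rho$ are exactly the $H^\rho$ with $\ell\subseteq H$, and that $\langle H^\rho:\ell\subseteq H\rangle=\big(\bigcap_{\ell\subseteq H}H\big)^\rho=\ell^\rho$, since the intersection of all $\Fqn$-hyperplanes through $\ell$ is $\ell$; thus $\ell^\rho$ is generated by elements of $\D_\rho$ and $\ell=(\ell^\rho)^\rho\in(\D_\rho)_\rho$, giving $\D\subseteq(\D_\rho)_\rho$ and hence equality of these two spreads.
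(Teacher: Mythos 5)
Your proposal is correct and takes essentially the same route as the paper's proof: the contrapositive argument via an element $H^{\rho}\in\D_{\rho}$ meeting both $W^{\rho}$ and $U^{\rho}$, the dualized dimension bounds $\dim(H\cap U)\ge 1$ and $\dim(H\cap W)\ge rn-2n+1$, the observation that the spread element through a point of $H\cap U$ lies inside $H$ and must therefore meet $W$ by a dimension count, and the reduction of the converse to $(\D_{\rho})_{\rho}=\D$. The only difference is one of completeness: you explicitly verify the dimension requirements and supply a proof of $(\D_{\rho})_{\rho}=\D$, which the paper merely asserts.
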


\begin{proof}
Suppose $(\D_{\rho},W^{\rho},U^{\rho})$ is not a BEL-configuration. Then there exists some $H^{\rho} \in \D_{\rho}$ such that $H^{\rho}\cap U^{\rho}$ and $H^{\rho} \cap W^{\rho}$ are both non-trivial. Now
\[
H \cap U = H^{\rho \rho }\cap U^{\rho \rho} = \langle H^{\rho},U^{\rho} \rangle^{\rho},
\]
and so $\dim(H\cap U) = \dim(\langle H^{\rho},U^{\rho} \rangle^{\rho}) > 0$. By the definition of $\D_{\rho}$, $H$ is spanned by elements of $\D$, and hence there exists an element $T\in \D$ such that $T\subset H$ and $T\in \cB_\D(U)$. On the other hand, since $H^\rho \in \cB_{\D^\rho}(W^\rho)$, we have
$\dim(H\cap W) >rn-2n$. 
This implies that each element of $\D$ which is contained in $H$, must intersect $W$ non-trivially, i.e. must belong
to $\cB_\D(W)$. This implies that also $T\in \cB_\D(W)$, a contradiction, as $(\D,U,W)$ is a BEL-configuration. If follows that $(\D_{\rho},W^{\rho},U^{\rho})$ is a BEL-configuration. 
For the converse it suffices to note that $(\D_\rho)_\rho=\D$.
\end{proof}

Hence, by Theorem \ref{thm:BELpolarity}, from a semifield spread $\cS(\D,U,W)$ and a polarity $\rho$ we obtain a semifield spread $\cS(\D_{\rho},W^{\rho},U^{\rho})$. However, as we will show, this operation does not induce an extension of the Knuth orbit. First we will show that different choices of the polarity $\rho$ give equivalent semifield spreads. Subsequently, we will show that this operation corresponds to the dual operation on spreads, or in other words the transpose operation on semifields.

Define a nondegenerate symmetric bilinear form $b$ on $V(rn,q)$ by
\begin{eqnarray}
\label{eqn:bilform}
b(u,v) = b((u_1,\ldots,u_r),(v_1,\ldots,v_r)) := \tr\left(\sum_{i=1}^r u_i v_i\right),
\end{eqnarray}
where $\tr$ denotes the field trace from $\Fqn$ to $\Fq$. Denote the corresponding polarity by $\perp$, i.e. if $M$ is any subspace of $V(rn,q)$, then
\begin{eqnarray}
\label{eqn:perp}
M^{\perp} = \{v:v \in V(rn,q)\mid b(u,v)=0 ~\forall u \in M\}.
\end{eqnarray}

\begin{lemma}\label{lem:D_perp}
If $\D=\D_{r,n,q}$ and $\perp$ denotes the polarity as in (\ref{eqn:perp}), then $\D_\perp$=$\D$.
\end{lemma}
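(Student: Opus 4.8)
The plan is to show that the Desarguesian spread $\D=\D_{r,n,q}$ is \emph{self-dual} under the polarity $\perp$, by exhibiting, for each spread element $\cB(v)$, its image $\cB(v)^{\perp}$ explicitly and recognising it as a member of $\D_{\perp}$. Recall that the elements of $\D$ are the $1$-dimensional $\Fqn$-subspaces of $(\Fqn)^r$, and (by Lemma~\ref{lem:Drho} and its proof) the elements of $\D_{\perp}$ are the images under $\perp$ of the $(rn-n)$-dimensional subspaces spanned by elements of $\D$, i.e. the images of the hyperplane-like $\Fqn$-subspaces of codimension $1$. So to prove $\D_{\perp}=\D$ it suffices to show that the $\perp$-image of every $\Fqn$-hyperplane of $(\Fqn)^r$ is again a $1$-dimensional $\Fqn$-subspace, equivalently that $\perp$ maps the $(r-k)$-dimensional $\Fqn$-subspaces to $k$-dimensional $\Fqn$-subspaces.

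First I would record the key computation underlying everything: for $v=(v_1,\ldots,v_r)$ and $w=(w_1,\ldots,w_r)$ in $(\Fqn)^r$, the form is $b(v,w)=\tr\!\left(\sum_i v_i w_i\right)$, where $\tr:\Fqn\to\Fq$. The crucial point is that although $b$ is only $\Fq$-bilinear, the associated $\Fqn$-valued pairing $\langle v,w\rangle_{\Fqn}:=\sum_i v_i w_i$ is a nondegenerate $\Fqn$-bilinear form on $(\Fqn)^r$, and $b=\tr\circ\langle\,,\,\rangle_{\Fqn}$. Since $\tr$ is a nonzero $\Fq$-linear functional and the trace form $\tr(ab)$ on $\Fqn$ is nondegenerate, the $\Fq$-orthogonal complement of an $\Fqn$-subspace $M$ with respect to $b$ coincides with its $\Fqn$-orthogonal complement with respect to $\langle\,,\,\rangle_{\Fqn}$. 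In particular $M^{\perp}$ is automatically an $\Fqn$-subspace whenever $M$ is, and $\dim_{\Fqn}(M^{\perp})=r-\dim_{\Fqn}(M)$.

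Granting this, the proof is short. Take an arbitrary element of $\D_{\perp}$; by the description above it has the form $H^{\perp}$ where $H$ is an $(rn-n)$-dimensional subspace spanned by elements of $\D$, i.e. $H$ is an $\Fqn$-subspace of $(\Fqn)^r$ of $\Fqn$-dimension $r-1$. By the displayed fact, $H^{\perp}$ is an $\Fqn$-subspace of $\Fqn$-dimension $1$, hence $H^{\perp}=\cB(v)$ for some $v\in V^{\times}$, so $H^{\perp}\in\D$. This gives $\D_{\perp}\subseteq\D$. Conversely, every $\cB(v)\in\D$ is $1$-dimensional over $\Fqn$, so $\cB(v)^{\perp}$ is an $(r-1)$-dimensional $\Fqn$-subspace, i.e.\ an $(rn-n)$-dimensional space spanned by elements of $\D$; applying $\perp$ again and using $\perp\perp=\id$ shows $\cB(v)=(\cB(v)^{\perp})^{\perp}\in\D_{\perp}$. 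Since $|\D|=|\D_{\perp}|$, the inclusion already forces equality, but the two-sided argument makes it transparent.

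The main obstacle is precisely the step asserting that $\perp$ sends $\Fqn$-subspaces to $\Fqn$-subspaces and halves-and-complements their $\Fqn$-dimension correctly; this is exactly the compatibility of the $\Fq$-trace form with the underlying $\Fqn$-structure, and it is the only place where the choice of form~(\ref{eqn:bilform}) is used in an essential way. Everything else is bookkeeping about field reduction already justified in the proof of Lemma~\ref{lem:Drho}. I would therefore spend the bulk of the write-up on verifying that $M^{\perp}$ (the $\Fq$-perp under $b$) equals the $\Fqn$-perp of $M$ under $\sum_i v_iw_i$, after which the equality $\D_{\perp}=\D$ is immediate.
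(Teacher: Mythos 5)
Your proposal is correct and takes essentially the same route as the paper: the paper's proof is a one-line assertion that $H^{\perp}\in\D$ for every $(rn-n)$-dimensional subspace $H$ spanned by elements of $\D$, and your computation --- that for an $\Fqn$-subspace $M$ the $\Fq$-perp under $b=\tr\circ\langle\,,\,\rangle_{\Fqn}$ coincides with the $\Fqn$-perp under the nondegenerate $\Fqn$-form $\sum_i u_iv_i$, so $\perp$ sends $\Fqn$-hyperplanes to $\Fqn$-lines --- is precisely the standard justification the paper leaves implicit. Nothing in your write-up deviates from or adds risk to that argument; it simply supplies the omitted details.
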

\begin{proof}
If $H$ is a subspace of dimension $rn-n$ spanned by elements of $\D$, then $H^{\perp} \in \D$, and so $\D_{\perp} = \D$.
\end{proof}

\begin{theorem}\label{thm:rho=perp}
Let $\D=\D_{r,n,q}$. If $\rho$ is a nondegenerate  polarity of $V(rn,q)$, then the BEL-configurations $(\D_{\rho},W^{\rho},U^{\rho})$ and $(\D,W^{\perp},U^{\perp})$ are equivalent.
\end{theorem}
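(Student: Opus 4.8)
The plan is to produce a single semilinear map $\phi \in \GammaL(rn,q)$ carrying the BEL-configuration $(\D,W^{\perp},U^{\perp})$ onto $(\D_{\rho},W^{\rho},U^{\rho})$, after which the asserted equivalence of spreads is immediate from the first part of Remark \ref{rem:belop1}. Note first that $(\D,W^{\perp},U^{\perp})$ is genuinely a BEL-configuration: by Lemma \ref{lem:D_perp} we have $\D_{\perp}=\D$, so Theorem \ref{thm:BELpolarity} applied with the polarity $\perp$ guarantees it.

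First I would compare the two polarities. Both $\rho$ and $\perp$ are correlations of $\PG(rn-1,q)$, each reversing inclusion of subspaces, so the composite map $M\mapsto (M^{\perp})^{\rho}$ is inclusion-\emph{preserving}, i.e. a collineation. Since $rn\geq 3$, the Fundamental Theorem of Projective Geometry provides $\phi\in\GammaL(rn,q)$ inducing it, which after replacing $M$ by $M^{\perp}$ and using $\perp^{2}=\id$ yields
\[
M^{\rho}=\phi(M^{\perp})\qquad\text{for every subspace }M.
\]
In particular $W^{\rho}=\phi(W^{\perp})=(W^{\perp})^{\phi}$ and $U^{\rho}=\phi(U^{\perp})=(U^{\perp})^{\phi}$, so the $U$- and $W$-components match with $\psi:=\phi$. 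Choosing the composition order $M\mapsto (M^{\perp})^{\rho}$ rather than $M\mapsto (M^{\rho})^{\perp}$ is exactly what makes $\psi=\phi$ work directly, with no adjoint with respect to $b$ intervening.

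Next I would check $\D^{\phi}=\D_{\rho}$. By Lemma \ref{lem:D_perp}, the map $H\mapsto H^{\perp}$ is a bijection from the set of $(rn-n)$-dimensional subspaces spanned by elements of $\D$ onto $\D$ itself. Combining this with $H^{\rho}=\phi(H^{\perp})$ and the defining description (\ref{eqn:Drho}) of $\D_{\rho}$, we get
\[
\D_{\rho}=\{H^{\rho}\}=\{\phi(H^{\perp})\}=\phi(\D)=\D^{\phi}.
\]
Hence $\psi=\phi$ satisfies $(\D^{\psi},(W^{\perp})^{\psi},(U^{\perp})^{\psi})=(\D_{\rho},W^{\rho},U^{\rho})$. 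Applying Remark \ref{rem:belop1} to the BEL-configuration $(\D,W^{\perp},U^{\perp})$ with the element $\psi\in\GammaL(rn,q)$ gives $\cS(\D_{\rho},W^{\rho},U^{\rho})\simeq\cS(\D,W^{\perp},U^{\perp})$, which is precisely the claimed equivalence of the two BEL-configurations.

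The main obstacle is the first step: establishing the clean relation $M^{\rho}=\phi(M^{\perp})$ with a single $\phi\in\GammaL(rn,q)$ valid for all subspaces simultaneously. One must get the composition order right (to avoid an adjoint) and must treat the three admissible polarity types --- symmetric bilinear, symplectic, and hermitian --- uniformly; the Fundamental Theorem handles this automatically, since it uses only that $\rho$ and $\perp$ are both correlations, not their specific form. The remaining steps are bookkeeping, relying on $\D_{\perp}=\D$ and the already-established fact that $\GammaL(rn,q)$ preserves BEL-equivalence of spreads.
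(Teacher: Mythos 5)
Your proof is correct, and it shares the paper's overall skeleton --- produce a single $\phi\in\GammaL(rn,q)$ relating $\rho$ and $\perp$ on all subspaces simultaneously, then use $\D_{\perp}=\D$ (Lemma \ref{lem:D_perp}) to get $\D_{\rho}=\D^{\phi}$, and finish with Remark \ref{rem:belop1} --- but you obtain $\phi$ by a genuinely different mechanism. The paper works directly with the forms: writing $b_{\rho}(u,v)=u^TB_{\rho}v^{\sigma}$ for an invertible Gram matrix $B_{\rho}$ and automorphism $\sigma$, it defines $v^{\phi}=B_{\perp}^{-1}B_{\rho}v^{\sigma}$, so that $b_{\rho}(u,v)=b(u,v^{\phi})$ and hence $M^{\rho}=M^{\perp\phi^{-1}}$ for every subspace $M$. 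You instead compose the two correlations and invoke the Fundamental Theorem of Projective Geometry on the resulting collineation. Your route buys uniformity: it never touches Gram matrices and treats the symmetric, symplectic and hermitian cases (indeed any correlation whatsoever) in one stroke. What it costs is the hypothesis $rn\geq 3$, which you correctly flag: in the degenerate case $rn=2$ (forcing $n=1$, where the semifield is just $\Fq$) the FTPG is unavailable, whereas the paper's explicit matrix construction still goes through; since that case is trivial anyway, this is a caveat rather than a gap. The remaining difference is bookkeeping convention only --- your composition order $M\mapsto(M^{\perp})^{\rho}$ places $\phi$ where no inverse appears, while the paper lands on $M^{\rho}=M^{\perp\phi^{-1}}$ and then applies $\phi$; the two are interchangeable. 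Your supporting steps (that $(\D,W^{\perp},U^{\perp})$ is a BEL-configuration via Theorem \ref{thm:BELpolarity} with the polarity $\perp$, and that $\{H^{\perp}\}=\D_{\perp}=\D$ yields $\D_{\rho}=\D^{\phi}$) are all consistent with the paper.
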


\begin{proof}
First we claim that there exists some $\phi \in \GammaL(rn,q)$ such that
\[
b_{\rho}(u,v) = b(u,v^{\phi})
\]
for all $u,v \in V(rn,q)$, where $b$ is as defined in (\ref{eqn:bilform}) and $b_\rho$ is the form associated to the polarity $\rho$. If we take the elements of $V(rn,q)$ to be column vectors of length $rn$ over $\Fq$, then for every symmetric bilinear, symplectic or hermitian form $b_{\rho}$ on $V(rn,q)$ there exists an invertible $rn \times rn$ matrix $B_{\rho}$ with entries in $\Fq$ and an $\Fq$-automorphism $\sigma$ such that $b_{\rho}(u,v) = u^T B_{\rho} v^{\sigma}$. Hence defining $\phi \in \GammaL(rn,q)$ by $v^{\phi} = B_{\perp}^{-1}B_{\rho}v^{\sigma}$, where $B_\perp$ is the matrix corresponding to $b$ as defined in (\ref{eqn:bilform}), the claim is proven.

Hence for any subspace $M$, we have that $v \in M^{\rho}$ if and only if $b(u,v^{\phi})=0$ for all $u \in M$, if and only if $v^{\phi} \in M^{\perp}$, implying $M^{\rho} = M^{\perp \phi^{-1}}$. 
It also follows that $\D_\rho=(\D_\perp)^{\phi^{-1}}$. Moreover, by Lemma \ref{lem:D_perp}, $\D_{\perp} = \D$. Then $(\D_{\rho},W^{\rho},U^{\rho}) = ((\D_{\perp})^{\phi^{-1}},W^{\perp\phi^{-1}},U^{\perp\phi^{-1}}) = (\D^{\phi^{-1}},W^{\perp\phi^{-1}},U^{\perp\phi^{-1}})$. Hence by Remark \ref{rem:belop1}, we can apply $\phi$ to get that $\cS(\D_{\rho},W^{\rho},U^{\rho})$ is equivalent to $\cS(\D,W^{\perp},U^{\perp})$, as claimed.
\end{proof}

\begin{lemma}
\label{lem:Ufperp}
Consider the subspace $U_f$ of $V(rn,q)$ as in Lemma \ref{lem:Uf}
for some $r$-tuple $f = (f_1,f_2,\ldots,f_r)$ of $\Fq$-endomorphisms of $\Fqn$.
Then $U_f^{\perp} = W_{\hat{f}}$, where $\hat{f} := (\hat{f}_1, \ldots,\hat{f}_{r})$, and 
\[
W_{\hat{f}}:=\{(x_1,x_2,\ldots,x_r): x_i\in \Fqn ~|~ \sum_i \hat{f_i}(x_i) = 0\}.
\]
\end{lemma}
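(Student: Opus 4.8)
The plan is to prove the set equality $U_f^{\perp} = W_{\hat{f}}$ directly by unwinding the definition of the perpendicular space and reducing everything to a single trace expression, which I can then kill using the adjoint relation and the nondegeneracy of the trace form. Recall that $v = (v_1,\ldots,v_r)$ lies in $U_f^{\perp}$ precisely when $b(u,v)=0$ for every $u \in U_f$. Since the typical element of $U_f$ is $(f_1(x),\ldots,f_r(x))$ with $x$ ranging over $\Fqn$, the membership condition becomes
\[
b\bigl((f_1(x),\ldots,f_r(x)),(v_1,\ldots,v_r)\bigr) = \tr\!\left(\sum_{i=1}^r f_i(x)v_i\right) = 0 \qquad \text{for all } x \in \Fqn,
\]
using the explicit form $b$ from (\ref{eqn:bilform}).

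The key step is to transport each $f_i$ onto the $v_i$ side using the defining property of the adjoint with respect to the form $(x,y)\mapsto \tr(xy)$, namely $\tr(f_i(x)v_i)=\tr(x\hat{f}_i(v_i))$. Summing over $i$ and using $\Fq$-linearity of the trace, the condition rewrites as
\[
\tr\!\left(\sum_{i=1}^r f_i(x)v_i\right) = \sum_{i=1}^r \tr\bigl(x\,\hat{f}_i(v_i)\bigr) = \tr\!\left(x\sum_{i=1}^r \hat{f}_i(v_i)\right) = 0 \qquad \text{for all } x \in \Fqn.
\]
Now I invoke the nondegeneracy of $\tr(xz)$ on $\Fqn$: since $\tr(xz)=0$ for all $x$ forces $z=0$, the above holds for all $x$ if and only if $\sum_{i=1}^r \hat{f}_i(v_i)=0$. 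This is exactly the defining equation of $W_{\hat{f}}$, so $v \in U_f^{\perp}$ if and only if $v \in W_{\hat{f}}$, giving the claimed equality.

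I do not expect a genuine obstacle here, as the argument is an application of the adjoint identity followed by nondegeneracy; the only points requiring care are the correct bookkeeping when moving each $f_i$ across (each coordinate uses the adjoint $\hat{f}_i$ of that coordinate's endomorphism) and the observation that the argument is an ``if and only if'' at every step, so no separate containment or dimension count is strictly needed. As a consistency check one may note that $\dim U_f = n$ forces $\dim U_f^{\perp} = rn-n$, which matches the dimension of $W_{\hat{f}}$ and is compatible with $W_{\hat{f}}$ being a genuine subspace of the form described in Lemma \ref{lem:Uf}.
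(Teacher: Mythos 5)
Your proposal is correct and follows essentially the same route as the paper: unwind membership in $U_f^{\perp}$ via the form $b$, move each $f_i$ to the other side using the adjoint identity $\tr(f_i(x)v_i)=\tr(x\hat{f}_i(v_i))$, and conclude by nondegeneracy of the trace form that the condition is exactly $\sum_i \hat{f}_i(v_i)=0$. The paper's proof is the same two-line computation, so there is nothing to add.
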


\begin{proof}
Now $v \in U_f^{\perp}$ if and only if
\begin{align*}
b((f_1(u),\ldots,f_r(u)),v) &= \tr(\sum_i f_i(u)v_i)\\
		&= \tr(u \sum_i \hat{f_i}(v_i)) = 0
\end{align*}
for all $u\in \Fqn$. But this occurs if and only if $\sum_i \hat{f_i}(v_i) = 0$, i.e. if and only if $v \in W_{\hat{f}}$, proving the claim. 
\end{proof}

\begin{lemma}
\label{lem:Sfgtranspose}
Suppose $f$ and $g$ are $r$-tuples of endomorphisms such that $\SSS_{f,g}$ is a semifield, with multiplication as defined in (\ref{eqn:sfgmult}). Then $\SSS_{f,g}^t = \SSS_{\hat{g},\hat{f}}$.
\end{lemma}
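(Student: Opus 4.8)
The plan is to compute the right-multiplication maps of $\SSS_{f,g}$ directly, take their adjoints with respect to the trace form, and recognise the outcome as $\SSS_{\hat g,\hat f}$. By the transpose formula established earlier, namely $\SSS^t(x,y) = \hat{R}_y(x)$ where $R_y(x) = \SSS(x,y)$, it suffices to show that the adjoint of the right-multiplication operator of $\SSS_{f,g}$ by $y$ coincides with the right-multiplication operator of $\SSS_{\hat g,\hat f}$ by $y$.

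First I would express $R_y$ as a composition of $\Fq$-linear maps. Writing $M_y$ for the $\Fq$-endomorphism $z \mapsto zy$ of $\Fqn$, the definition (\ref{eqn:sfgmult}) gives
\[
R_y(x) = \SSS_{f,g}(x,y) = \sum_{i=1}^r g_i(f_i(x)y) = \sum_{i=1}^r g_i(M_y(f_i(x))),
\]
so that $R_y = \sum_i g_i M_y f_i$ as operators on $\Fqn$.

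The key observation is that $M_y$ is self-adjoint with respect to the form $(a,b)\mapsto \tr(ab)$, since $\tr(M_y(z)w) = \tr(zyw) = \tr(z\cdot wy) = \tr(zM_y(w))$ by commutativity of $\Fqn$; hence $\hat{M_y} = M_y$. Applying the adjoint and using the rules $\widehat{fg} = \hat{g}\hat{f}$ and $\hat{\hat{f}}=f$ recalled in the introduction, I obtain
\[
\hat{R}_y = \sum_{i=1}^r \widehat{g_i M_y f_i} = \sum_{i=1}^r \hat{f_i}\,\hat{M_y}\,\hat{g_i} = \sum_{i=1}^r \hat{f_i} M_y \hat{g_i},
\]
and therefore
\[
\SSS_{f,g}^t(x,y) = \hat{R}_y(x) = \sum_{i=1}^r \hat{f_i}(\hat{g_i}(x)y).
\]
Comparing with (\ref{eqn:sfgmult}) applied to the tuples $\hat g = (\hat{g_1},\ldots,\hat{g_r})$ and $\hat f = (\hat{f_1},\ldots,\hat{f_r})$ yields exactly $\SSS_{\hat g,\hat f}(x,y) = \sum_i \hat{f_i}(\hat{g_i}(x)y)$, so the two multiplications coincide and the claim follows.

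The main point requiring care is the bookkeeping when passing to adjoints: the reversal in $\widehat{fg}=\hat{g}\hat{f}$ is precisely what interchanges the roles of $f$ and $g$ and produces $\SSS_{\hat g,\hat f}$ rather than $\SSS_{\hat f,\hat g}$. One should also note that the hypothesis that $\SSS_{f,g}$ is a semifield guarantees $R_y$ is nonsingular for $y\neq 0$, so the transpose formula of the earlier lemma applies as a genuine equality of multiplications; no further obstacle then arises.
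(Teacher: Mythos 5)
Your proof is correct and follows essentially the same route as the paper's: both write $R_y$ as the composition $\sum_i g_i \circ M_y \circ f_i$, apply the adjoint reversal rule $\widehat{fg}=\hat{g}\hat{f}$, and use the self-adjointness of multiplication by $y$ (which the paper abbreviates as $\hat{y}=y$) to conclude $\hat{R}_y = \sum_i \hat{f}_i M_y \hat{g}_i = R'_y$ for $\SSS_{\hat{g},\hat{f}}$. Your explicit verification that $\hat{M_y}=M_y$ via $\tr(zyw)=\tr(z\cdot wy)$ is a detail the paper leaves unstated, but the argument is otherwise identical.
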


\begin{proof}
If $R_y$ denotes the endomorphism of right multiplication by $y$ in $\SSS_{f,g}$, then 
\[
R_y = \sum_{i} g_i y f_i.
\]

Hence $\hat{R}_y = \sum_i \hat{f}_i \hat{y} \hat{g}_i$. But $\hat{y}=y$, and so
\begin{align*}
\SSS_{f,g}^t(x,y) &= \sum_i (\hat{f_i}y \hat{g_i})(x)\\
					&= \sum_i \hat{f}_i(\hat{g}_i(x)y)\\
					&= \SSS_{\hat{g},\hat{f}}(x,y),
\end{align*}
as claimed.
\end{proof} 

\begin{theorem}
\label{thm:BELtranspose}
Let $\D=\D_{r,n,q}$, $\rho$ a nondegenerate polarity and $(\D,U,W)$ a BEL-configuration in $V(rn,q)$.
Then the semifield spreads $\cS(\D_\rho,W^{\rho},U^{\rho})$ and $\cS(\D,U,W)$ are dual to each other.
\end{theorem}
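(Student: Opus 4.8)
The plan is to reduce the statement to the explicit multiplication of Theorem \ref{thm:fgmult} and then invoke the identification of the transpose provided by Lemma \ref{lem:Sfgtranspose}. First I would use Lemma \ref{lem:Uf} to write $U = U_f$ and $W = W_g$ for suitable $r$-tuples of endomorphisms $f$ and $g$, so that by Theorem \ref{thm:fgmult} the spread $\cS(\D,U,W)$ is equivalent to $\cS(\SSS_{f,g})$, where $\SSS_{f,g}(x,y) = \sum_i g_i(f_i(x)y)$.

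Next, since the statement allows an arbitrary nondegenerate polarity $\rho$, I would replace $\rho$ by the specific polarity $\perp$ associated to the form $b$ of (\ref{eqn:bilform}). By Theorem \ref{thm:rho=perp} the BEL-configuration $(\D_\rho,W^\rho,U^\rho)$ is equivalent to $(\D,W^\perp,U^\perp)$, and by Lemma \ref{lem:D_perp} we have $\D_\perp = \D$, so it suffices to treat the case $\rho=\perp$. The heart of the argument is then the explicit computation of $W^\perp$ and $U^\perp$. Lemma \ref{lem:Ufperp} gives $U^\perp = U_f^\perp = W_{\hat f}$ directly. Applying the same lemma to $U_{\hat g}$ and using the involutory identity $\hat{\hat{g}}=g$ yields $U_{\hat g}^\perp = W_g$, whence, taking $\perp$ of both sides, $W^\perp = W_g^\perp = U_{\hat g}$. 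Therefore $(\D,W^\perp,U^\perp) = (\D,U_{\hat g},W_{\hat f})$, and a second application of Theorem \ref{thm:fgmult} shows that $\cS(\D,W^\perp,U^\perp)$ is equivalent to $\cS(\SSS_{\hat g,\hat f})$.

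Finally I would invoke Lemma \ref{lem:Sfgtranspose}, which identifies $\SSS_{\hat g,\hat f} = \SSS_{f,g}^t$. Chaining the equivalences gives $\cS(\D_\rho,W^\rho,U^\rho) \simeq \cS(\SSS_{f,g}^t)$, while $\cS(\D,U,W)\simeq \cS(\SSS_{f,g})$. Since, by the discussion preceding (\ref{eqn:tmult}), the transpose operation on semifields corresponds precisely to passing to the dual spread, these two spreads are dual to one another, which is the assertion.

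I expect the only point requiring genuine care to be the verification that $W_g^\perp = U_{\hat g}$, i.e.\ the ``reverse'' form of Lemma \ref{lem:Ufperp}; once this perp computation is in place, everything is a formal chain of equivalences. The conceptual content—namely the compatibility between the form $b$ on $V(rn,q)$ defining $\perp$ and the adjoint $\hat{\phantom{f}}$ on $V(n,q)$ governing the transpose—is already packaged into Lemmas \ref{lem:Ufperp} and \ref{lem:Sfgtranspose}, so no further reconciliation of the two forms is needed beyond citing them. The remaining subtlety is purely terminological: one must keep in mind that ``dual spreads'' here refers to the transpose operation $t$ (dual spread with respect to a symmetric bilinear form), not the opposite-algebra dual $d$.
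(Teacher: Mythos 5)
Your proposal is correct and follows essentially the same route as the paper's proof: write $(U,W)=(U_f,W_g)$, reduce the general polarity $\rho$ to $\perp$ via Theorem~\ref{thm:rho=perp}, compute the perps with Lemma~\ref{lem:Ufperp}, and chain Theorem~\ref{thm:fgmult} with Lemma~\ref{lem:Sfgtranspose} to identify $\cS(\D,W^{\perp},U^{\perp})$ with $\cS(\SSS_{f,g}^t)$. The only difference is that you spell out the step $W_g^{\perp}=U_{\hat g}$ (via $\hat{\hat g}=g$ and involutivity of $\perp$), which the paper leaves implicit in its citation of Lemma~\ref{lem:Ufperp}.
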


\begin{proof}
%
%
%

Let $(U,W) = (U_f,W_g)$ for some $f,g$. By Theorem \ref{thm:fgmult}, $\cS(\D,U,W)$ is equivalent to $\cS(\SSS_{f,g})$. By Lemma \ref{lem:Ufperp}, $\cS(\D,W^{\perp},U^{\perp}) = \cS(\D,U_{\hat{g}},W_{\hat{f}})$, which is equivalent to $\cS(\SSS_{{\hat{g},\hat{f}}})$. By Lemma \ref{lem:Sfgtranspose}, $\SSS_{f,g}^t = \SSS_{\hat{g},\hat{f}}$, and hence $\cS(\SSS_{f,g}^t)=\cS(\SSS_{\hat{g},\hat{f}})$, proving that $\cS(\D,W^{\perp},U^{\perp})$ is equivalent to $\cS(\SSS_{f,g}^t)$. It follows that
$\cS(\D,W^{\perp},U^{\perp})$ and 
$\cS(\D,U,W)$ are dual to each other. 
Theorem \ref{thm:rho=perp} completes the proof.
\end{proof}

\begin{remark}
We may define now an operation on the equivalence classes $[U,W]$ defined in Remark \ref{rem:belop1}, where the Desarguesian spread is taken to be $\D_{r,n,q}$. Define 
\begin{eqnarray}\label{eqn:[U,W]^t}
[U,W]^{t} = [W^{\perp},U^{\perp}].
\end{eqnarray}
Then this operation is well defined, as $U^{\perp\phi}=U^{\phi\perp}$ for all $\phi \in \GammaL(r,q^n)$.
\end{remark}

We are now ready to prove a new characterization of symplectic semifield spreads, using BEL-configurations.

\begin{theorem}
\label{thm:sympl}
Suppose $(\D_{r,n,q},U,U^{\perp})$ is a BEL-configuration in $V(rn,q)$, where $\perp$ is as defined in (\ref{eqn:perp}). Then $\cS(\D_{r,n,q},U,U^{\perp})$ is a symplectic semifield spread. Conversely, for every symplectic semifield spread $\cS'$, there exists some $r$ and some $U$ a subspace of dimension $n$ in $V(rn,q)$ such that $\cS'$ is equivalent to $\cS(\D_{r,n,q},U,U^{\perp})$.
\end{theorem}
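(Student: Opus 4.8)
The plan is to reduce everything to the explicit multiplication $\SSS_{f,g}$ of Theorem~\ref{thm:fgmult} and to recognise the symplectic condition as self-duality of the associated spread under the alternating form $b_{\epsilon}$. Note first that $b_{\epsilon}$ is nondegenerate and alternating, so that a semifield spread in $V(2n,q)$ is symplectic precisely when all of its ($n$-dimensional, hence maximal totally isotropic) components are self-perpendicular under $b_{\epsilon}$. From the identities $A_y^{\epsilon}=\{(x,\hat{R}_y(x)):x\in\Fqn\}$ and $A_{\infty}^{\epsilon}=A_{\infty}$ established in the discussion of the transpose operation, the spread $\cS(\SSS)$ is self-dual under $b_{\epsilon}$ if and only if $R_y=\hat{R}_y$ for all $y$, i.e. if and only if $\SSS=\SSS^t$.

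For the forward direction I would write $U=U_f$ by Lemma~\ref{lem:Uf}. Then Lemma~\ref{lem:Ufperp} gives $U^{\perp}=U_f^{\perp}=W_{\hat f}$, so in the notation of Theorem~\ref{thm:fgmult} the configuration $(\D_{r,n,q},U,U^{\perp})$ is $(\D_{r,n,q},U_f,W_{\hat f})$, and $\cS(\D_{r,n,q},U,U^{\perp})\simeq\cS(\SSS_{f,\hat f})$. By Lemma~\ref{lem:Sfgtranspose}, $\SSS_{f,\hat f}^{t}=\SSS_{\hat{\hat f},\hat f}=\SSS_{f,\hat f}$, since $\hat{\hat f}=f$. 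Thus $\SSS_{f,\hat f}=\SSS_{f,\hat f}^{t}$, so by the observation above $\cS(\SSS_{f,\hat f})$ is self-dual under $b_{\epsilon}$ and hence symplectic; as being symplectic is a property of the equivalence class of the spread, $\cS(\D_{r,n,q},U,U^{\perp})$ is a symplectic semifield spread.

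For the converse, let $\cS'$ be a symplectic semifield spread. Since all nondegenerate alternating forms on $V(2n,q)$ are equivalent under $\GL(2n,q)$, I may apply an equivalence to assume that $\cS'$ is self-dual under $b_{\epsilon}$ itself; writing $\cS'=\cS(\SSS)$, the observation then yields $\SSS=\SSS^t$, i.e. each $R_y$ is self-adjoint for the trace form. Nondegeneracy of the trace form lets me define a symmetric $\Fq$-bilinear map $\mu:\Fqn\times\Fqn\to\Fqn$ by $\tr(y\,\mu(x,z))=\tr(z R_y(x))$, where symmetry of $\mu$ in $(x,z)$ is exactly the self-adjointness of $R_y$. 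The aim is to factor $\mu(x,z)=\sum_{k=1}^{r}f_k(x)f_k(z)$ for some $r$ and $\Fq$-linear maps $f_k$. Granting this, a direct trace computation gives $\SSS(x,y)=\sum_k\hat f_k(f_k(x)y)=\SSS_{f,\hat f}(x,y)$ with $f=(f_1,\ldots,f_r)$, so $\cS'=\cS(\SSS)\simeq\cS(\D_{r,n,q},U_f,U_f^{\perp})$ by Lemma~\ref{lem:Ufperp} and Theorem~\ref{thm:fgmult}, taking $U=U_f$; moreover $(\D_{r,n,q},U_f,U_f^{\perp})$ is a genuine BEL-configuration because $\SSS$ has no zero divisors, which forces $U_f\cap\tilde{\cB}(W_{\hat f})=\emptyset$ through Lemma~\ref{lem:belprop}.

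The main obstacle is the factorisation $\mu(x,z)=\sum_k f_k(x)f_k(z)$ of a symmetric $\Fq$-bilinear map into a sum of ``squares'' of linear maps (and, secondarily, the reduction of an arbitrary symplectic semifield spread to a representative that is self-dual under $b_{\epsilon}$ on the nose rather than merely up to equivalence). For the factorisation I would observe that the maps $(x,z)\mapsto f(x)g(z)$ span all $\Fq$-bilinear maps $\Fqn\times\Fqn\to\Fqn$, since the field multiplication $\Fqn\otimes_{\Fq}\Fqn\to\Fqn$ is onto and $f,g$ range over all $\Fq$-linear maps; their symmetrisations $f(x)g(z)+g(x)f(z)$ therefore span all symmetric ones, and polarisation recovers genuine squares. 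This forces $\mu$ to be expressible as $\sum_k f_k(x)f_k(z)$ at the cost of enlarging $r$, which is exactly why the statement only asserts the existence of some $r$ rather than $r=n$. The characteristic $2$ case, where symmetric and alternating forms interact differently, should be verified separately but presents no essential difficulty.
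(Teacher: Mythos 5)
Your proof is correct, and while it lands on the same key identity as the paper --- $\SSS=\SSS_{f,\hat f}$ together with a decomposition of a symmetric object into rank-one ``squares'' --- the route is genuinely more self-contained. The paper disposes of the forward direction by a one-line appeal to Theorem \ref{thm:BELtranspose}, whereas you re-derive it from Lemma \ref{lem:Sfgtranspose} and $\hat{\hat f}=f$; this buys exact self-adjointness $R_y=\hat R_y$, hence total isotropy of every component under the alternating form $b_{\epsilon}$ \emph{on the nose}, not merely equivalence to the dual spread. For the converse, the paper invokes as a known black box the correspondence between symplectic semifield spreads and semifields with $\SSS^{dtd}$ commutative, writes the symmetric matrix $C=(c_{ij})$ of structure constants of $\SSS^{dtd}$, and decomposes $C=\sum_k v_kv_k^T$. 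You instead normalise $\cS'$ to be componentwise $b_{\epsilon}$-self-perpendicular, deduce $\SSS=\SSS^t$, and introduce the map $\mu$. It is worth noting that your $\mu$ \emph{is} $\SSS^{dtd}$, exactly: with $T(x,y,z)=\tr(z\,\SSS(x,y))$, the operations $d$ and $t$ act on the three slots of $T$ as the transpositions $(12)$ and $(13)$, so $dtd=(23)$, and indeed $\mu(x,z)=\sum_{i,j}c_{ij}^{q^{-j}}x^{q^{i-j}}z^{q^{-j}}$ agrees with the paper's table entry for $\SSS^{dtd}$. So your argument proves inline the Kantor-type correspondence that the paper cites, and your factorisation $\mu(x,z)=\sum_k f_k(x)f_k(z)$ is precisely the coordinate-free form of $C=\sum_k v_kv_k^T$ (both versions need $r$ unrestricted, as the paper's subsequent remark explains).

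Two repairs are needed, neither fatal. First, in characteristic $2$ your spanning claim is literally false: the symmetrisations $f(x)g(z)+g(x)f(z)$ vanish at $z=x$, so they span only the alternating bilinear maps, not all symmetric ones. You flagged this, and the fix is immediate: the set of finite sums of squares $h(x)h(z)$ is closed under addition and under $\Fqn$-scalars (every element of a finite field is a square in characteristic $2$, and a sum of two squares in odd characteristic), and it contains every symmetrisation via $(f+g)(x)(f+g)(z)-f(x)f(z)-g(x)g(z)$; together with the squares themselves this exhausts all symmetric maps in every characteristic. Second, your opening reduction needs a word more than ``apply an equivalence'': you must move the shears element of $\cS'$ to $A_{\infty}$ and another component to $A_0$ by a map preserving $b_{\epsilon}$ up to scalar, which works because the symplectic group acts transitively on ordered pairs of complementary totally isotropic $n$-spaces; relatedly, ``self-dual'' must be read componentwise ($A^{\epsilon}=A$ for each component), since $\cS^{\epsilon}=\cS$ as a set of subspaces is weaker than what the symplectic property demands and weaker than what your derivation of $R_y=\hat R_y$ actually uses. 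With these points patched, your version establishes strictly more than the paper's, at the cost of length; the paper's is shorter precisely because it delegates the normalisation and the commutativity correspondence to the literature.
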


\begin{proof}
The first statement follows immediately from Theorem \ref{thm:BELtranspose}. For the converse, suppose $\cS'$ is a symplectic semifield spread. Then $\cS'$ is equivalent to $\cS(\SSS)$, where $\SSS^{dtd}$ is some commutative presemifield. Suppose $\SSS^{dtd}(x,y) = \sum_{i,j} c_{ij} x^{q^i}y^{q^j}$, and let $C$ be the $n \times n$ matrix over $\Fqn$ with $(i,j)$-th entry $c_{ij}$. Since $\SSS^{dtd}$ is commutative, $C$ is a symmetric matrix. Hence there exist some column vectors $v_k \in (\Fqn)^n$, $k=1,\ldots,r$,  such that
\[
C = \sum_{k=1}^r v_k v_k^T.
\]
In other words, we write the symmetric matrix $C$ as a sum of $r$ symmetric matrices of rank one. If we denote the $i$-th coordinate of $v_k$ by $f_{ki}$, then we have that $c_{ij} = \sum_k f_{ki}f_{kj}$, and so
\[
\SSS^{dtd}(x,y) = \sum_{k=1}^r\sum_{i,j = 0}^{n-1} f_{ki}f_{kj}x^{q^i}y^{q^j} = \sum_{k=1}^r\left(\sum_{i = 0}^{n-1} f_{ki}x^{q^i}\right)\left(\sum_{i = 0}^{n-1}f_{kj}y^{q^j}\right).
\]
Now if we define endomorphisms $f_k$ by $f_k(x) = \sum_{i=0}^{n-1} f_{ki}x^{q^i}$, it is clear that
\[
\SSS^{dtd}(x,y) = \sum_{k=1}^r f_k(x)f_k(y).
\]
Hence
\begin{align*}
\SSS(x,y) &= \sum_{k=1}^r \hat{f}_k(f_k(x)y)\\
			&= \SSS_{f,\hat{f}}(x,y),
\end{align*}
and so, by Theorem \ref{thm:fgmult} and Lemma \ref{lem:Ufperp}, $\cS(\SSS)=\cS(\SSS_{f,\hat{f}})$ is equivalent to $\cS(\D,U_f,U_f^{\perp})$, completing the proof.
\end{proof}

\begin{remark}
Note that we do not assume in Theorem \ref{thm:sympl} that $r\leq n$, in contrast to Theorem 4.1 of \cite{BEL2007}. This is because the proof relies on writing a symmetric $n\times n$ matrix as the sum of $r$ symmetric $n \times n$ matrices of rank one of a specific type, that is of the form $vv^T$ for some column vector $v$. This decomposition is not necessarily possible if we impose the condition $r\leq n$.
\end{remark}

\section{Isotopy and BEL-configurations}
%

Now we will consider further operations on BEL-configurations which preserve the BEL property. This section will follow \cite{Lavrauw2008}, where the case $r=n$ was considered. Afterwards, we will show how, when $r<n$, we can use this operation to produce non-isotopic semifields from a single BEL-configuration.

It is clear from the BEL property (Lemma \ref{lem:belprop}), that if $\phi,\phi'$ are elements of $\GammaL(rn,q)$ which fix the set $\tilde{\cB}_{\D}(W)$, then $(\D,U,W)$ is a BEL-configuration if and only if $(\D,U^{\phi},W^{\phi'})$ is a BEL-configuration. We now consider the question of when the two semifields arising from these configurations are isotopic, or in other words when the two semifield spreads are equivalent.

For the remainder of this section, we will set $\D=\D_{r,n,q}$.

Suppose $W_g$ is a subspace of $V(rn,q)$ of dimension $rn-n$ as defined in (\ref{eqn:W_g}). Define a map $\psi_g$ from $V(rn,q)$ into $\End_{\Fq}(\Fqn)$ by
\[
\psi_g:(v_1,\ldots,v_r) \mapsto \sum g_i v_i,
\]
where $\sum g_i v_i$ means the endomorphism $z \mapsto \sum g_i (v_i z)$.

\begin{lemma}
\label{lem:psiinj}
The map $\psi_g$ is injective if and only if $W_g$ does not contain an element of $\D$.
\end{lemma}
\begin{proof}
The map $\psi_g$ is not injective if and only if there exists some $v \ne 0$ such that $\psi_g(v)$ is the zero map. This occurs if and only if $\sum_i g_i(v_i x) =0$ for all $x \in \Fqn$, if and only if $\cB(v) = \{(v_1 x,\ldots,v_r x) :x \in \Fqn\}\leq W_g$, proving the claim.
\end{proof}

Note that this condition is the same condition from Lemma \ref{lem:Wcontain}.

\begin{lemma}
Let $v \in V(rn,q)^{\times}$. Then $\psi_g(v)$ is a singular endomorphism if and only if $v \in \tilde{\cB}(W_g)$.
\end{lemma}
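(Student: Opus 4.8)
The plan is to prove both directions at once by unwinding the two conditions into the same existence statement about a nonzero element of $\Fqn$, with commutativity of multiplication in $\Fqn$ supplying the bridge. No substantial geometric input is needed beyond the fact that $\cB(v)$ is the unique element of $\D_{r,n,q}$ containing the nonzero vector $v$.

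First I would unwind the statement that $\psi_g(v)$ is singular. By definition $\psi_g(v)$ is the $\Fq$-endomorphism $z \mapsto \sum_i g_i(v_i z)$, so it is singular exactly when it fails to be injective, i.e.\ precisely when there exists a nonzero $z \in \Fqn$ with $\sum_i g_i(v_i z) = 0$.

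Next I would unwind membership in $\tilde{\cB}(W_g)$. Since $v \ne 0$, the unique element of $\D_{r,n,q}$ containing $v$ is $\cB(v) = \{(\alpha v_1,\ldots,\alpha v_r) : \alpha \in \Fqn\}$, and by the definitions of $\cB_{\D}$ and $\tilde{\cB}_{\D}$ we have $v \in \tilde{\cB}(W_g)$ if and only if $\cB(v) \cap W_g \ne \{0\}$. By the description of $W_g$ in (\ref{eqn:W_g}), this says there is a nonzero $\alpha \in \Fqn$ with $(\alpha v_1,\ldots,\alpha v_r) \in W_g$, that is, $\sum_i g_i(\alpha v_i) = 0$.

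Finally, commutativity of $\Fqn$ gives $v_i z = z v_i$, so the two existence conditions---a nonzero $z$ with $\sum_i g_i(v_i z) = 0$, and a nonzero $\alpha$ with $\sum_i g_i(\alpha v_i) = 0$---are the same statement under $z = \alpha$. This identifies ``$\psi_g(v)$ is singular'' with ``$v \in \tilde{\cB}(W_g)$'' and completes the proof. I do not expect any real obstacle here; the only point worth flagging is the book-keeping that the vector $z$ witnessing the kernel of $\psi_g(v)$ is exactly the scalar $\alpha$ witnessing that $\cB(v)$ meets $W_g$, which becomes immediate once both sides are written in the form $\sum_i g_i(\alpha v_i) = 0$.
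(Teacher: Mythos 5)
Your proof is correct and is essentially the paper's own argument: both unwind ``$\psi_g(v)$ singular'' to the existence of a nonzero $\alpha \in \Fqn$ with $\sum_i g_i(\alpha v_i)=0$, identify this with $(\alpha v_1,\ldots,\alpha v_r) \in W_g$, i.e.\ $\cB(v) \cap W_g \ne \{0\}$, and conclude $v \in \tilde{\cB}(W_g)$. The only difference is presentational: you spell out the role of commutativity and the definition of $\tilde{\cB}$ slightly more explicitly than the paper does.
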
 

\begin{proof}
We have that $\psi_g(v)$ is singular, if and only if there exists some $\alpha \in \Fqn^{\times}$ such that $\psi_g(v)(\alpha) = 0$, if and only if  there exists some $\alpha \in \Fqn^{\times}$ such that $(\alpha v_1,\ldots,\alpha v_r) \in W_g$, if and only if  $v \in \tilde{\cB}(W_g)$, proving the claim.

%
%
\end{proof}

\begin{lemma}
\label{lem:psig}
Suppose $(\D,U_f,W_g)$ is a BEL-configuration. Then $\psi_g(U_f)$ is a subspace of $\End_{\Fq}(\Fqn)$ of dimension $n$, in which all of its nonzero elements are nonsingular. Furthermore, $\psi_g(U_f)$ is precisely the semifield spread set $R(\SSS_{f,g}^d)$.
\end{lemma}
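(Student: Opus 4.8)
The plan is to evaluate $\psi_g$ on a general element of $U_f$ and recognise the result as a multiplication operator of $\SSS_{f,g}$. For $u = (f_1(x),\ldots,f_r(x)) \in U_f$, the definition of $\psi_g$ gives
\[
\psi_g(u)(z) = \sum_i g_i(f_i(x)z) = \SSS_{f,g}(x,z),
\]
which is exactly left multiplication by $x$ in the presemifield $\SSS_{f,g}$. Since $\SSS_{f,g}^d(z,x) = \SSS_{f,g}(x,z)$, this operator is the right multiplication $R_x^d$ of the dual presemifield $\SSS_{f,g}^d$. As $x$ ranges over $\Fqn$ the element $u$ ranges over $U_f$, so
\[
\psi_g(U_f) = \{R_x^d : x \in \Fqn\} = R(\SSS_{f,g}^d),
\]
which is the second assertion.

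For the first assertion I would argue as follows. Since $\psi_g$ is $\Fq$-linear and $U_f$ is an $\Fq$-subspace, $\psi_g(U_f)$ is an $\Fq$-subspace of $\End_{\Fq}(\Fqn)$; its dimension equals $n = \dim U_f$ provided $\psi_g$ restricted to $U_f$ is injective. If some nonzero $u \in U_f$ lay in $\ker \psi_g$, then exactly as in the proof of Lemma \ref{lem:psiinj} one would have $\cB(u) \leq W_g$, forcing $\cB(u) \in \cB(U) \cap \cB(W)$ and contradicting the BEL property. Hence $\psi_g|_{U_f}$ is injective and $\dim \psi_g(U_f) = n$. To see that every nonzero element of $\psi_g(U_f)$ is nonsingular, take $u \in U_f^{\times}$: by the lemma immediately preceding, $\psi_g(u)$ is singular if and only if $u \in \tilde{\cB}(W_g)$, while Lemma \ref{lem:belprop}(3) gives $U \cap \tilde{\cB}(W) = \emptyset$, so $u \notin \tilde{\cB}(W_g)$ and $\psi_g(u)$ is nonsingular.

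I would also remark that, once the identification $\psi_g(U_f) = R(\SSS_{f,g}^d)$ is established, the whole first assertion follows at once from the general fact recorded after (\ref{eqn:rightmult1}), namely that the spread set of a presemifield is an $n$-dimensional $\Fq$-subspace of endomorphisms all of whose nonzero members are nonsingular; this applies because $\SSS_{f,g}^d$ is a presemifield whenever $\SSS_{f,g}$ is. Either route is essentially routine, and I expect no serious obstacle. The only points demanding care are fixing the dual convention so that $\psi_g(u)$ is read as a right multiplication of $\SSS_{f,g}^d$ rather than of $\SSS_{f,g}$, and verifying injectivity of $\psi_g$ on $U_f$; both reduce immediately to the BEL property via Lemma \ref{lem:belprop}.
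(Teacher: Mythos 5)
Your proposal is correct and follows essentially the same route as the paper: the identification $\psi_g(U_f)=\{L_x:x\in\Fqn\}=R(\SSS_{f,g}^d)$ is the paper's exact computation, and the first assertion is obtained, as in the paper, from the preceding lemma (singularity of $\psi_g(v)$ iff $v\in\tilde{\cB}(W_g)$) combined with the BEL property via Lemma \ref{lem:belprop}. Your separate injectivity argument is slightly redundant (nonsingular already implies nonzero, so injectivity of $\psi_g|_{U_f}$ is automatic), but this is a harmless elaboration of what the paper leaves implicit.
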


\begin{proof}
This first part follows immediately from the previous lemma. Now
\begin{align*}
\psi_g(U_f) &= \{y \mapsto \sum g_i(f_i(x)y):x \in \Fqn\}\\
			&= \{y \mapsto \SSS_{f,g}(x,y):x \in \Fqn\}\\
			&= \{L_x : x \in \Fqn\}\\
			&= R(\SSS_{f,g}^d),
\end{align*}
proving the claim.
\end{proof}

\begin{theorem}
Suppose $W$ is a subspace of $V(n^2,q)$ of dimension $n^2-n$ such that $W$ does not contain an element of $\D$. Suppose $(\D,U,W)$ and $(\D,U',W)$ are two BEL-configurations. Then $\cS(\D,U',W)$ is equivalent to $\cS(\D,U,W)$ or the dual of $\cS(\D,U,W)$ if and only if there exists some $\phi \in \mathrm{Stab}(\tilde{\cB}(W)) \leq \GammaL(n^2,q)$ such that $U' = U^{\phi}$.
\end{theorem}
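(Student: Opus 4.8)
The plan is to transport the problem from the geometry of $V(n^2,q)$ into the algebra $\End_{\Fq}(\Fqn)$ by means of the map $\psi_g$, and then to apply a Dieudonn\'e-type classification of the bijections of a matrix space that preserve singularity. First I would use Lemma \ref{lem:Uf} to write $U=U_f$, $U'=U_{f'}$ and $W=W_g$ for suitable $n$-tuples $f,f',g$ of $\Fq$-endomorphisms. The feature that makes the case $r=n$ special is that $\psi_g\colon V(n^2,q)\to \End_{\Fq}(\Fqn)$ is then an $\Fq$-linear \emph{bijection}: it is injective by Lemma \ref{lem:psiinj}, since $W$ contains no element of $\D$, and both spaces have $\Fq$-dimension $n^2$. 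Under $\psi_g$ the set $\tilde{\cB}(W)$ is carried exactly onto the nonzero singular endomorphisms (by the lemma preceding Lemma \ref{lem:psig}), while by Lemma \ref{lem:psig} we have $\psi_g(U)=R(\SSS_{f,g}^d)$ and $\psi_g(U')=R(\SSS_{f',g}^d)$.

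Fixing a trace-dual basis to identify $\End_{\Fq}(\Fqn)$ with $M_n(\Fq)$, conjugation by the fixed isomorphism $\psi_g$ carries $\mathrm{Stab}(\tilde{\cB}(W))\le \GammaL(n^2,q)$ isomorphically onto the group of $\Fq$-semilinear bijections of $M_n(\Fq)$ that stabilise the set of singular matrices. Classifying this last group is the crux of the argument, and the step I expect to be the main obstacle. Here I would invoke the semilinear form of Dieudonn\'e's theorem: every bijective $\Fq$-semilinear map of $M_n(\Fq)$ preserving the singular matrices has the form $M\mapsto AM^{\sigma}B$ or $M\mapsto A(M^{\sigma})^{T}B$, for fixed invertible $A,B$ and some $\sigma\in\Aut(\Fq)$. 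I would need to record the standard hypotheses on $n$ and $q$ under which this classification holds, and to recall that in the trace-dual basis the matrix transpose realises the adjoint, so that transposing a spread set corresponds to the transpose operation on the associated semifield. Writing $\bar\phi=\psi_g\phi\psi_g^{-1}$, the existence of $\phi\in\mathrm{Stab}(\tilde{\cB}(W))$ with $U'=U^{\phi}$ is thereby equivalent to $\psi_g(U')=\bar\phi(\psi_g(U))$ for a $\bar\phi$ of one of these two shapes.

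It then remains to read off the two shapes algebraically. If $\bar\phi(M)=AM^{\sigma}B$, then $R(\SSS_{f',g}^d)$ is the image of $R(\SSS_{f,g}^d)$ under $M\mapsto AM^{\sigma}B$, so $\SSS_{f',g}^d$ and $\SSS_{f,g}^d$ are isotopic; since the dual operation is well defined on isotopy classes (as verified earlier in the excerpt), $\SSS_{f',g}$ and $\SSS_{f,g}$ are isotopic, whence $\cS(\D,U',W)\simeq \cS(\D,U,W)$ by Theorem \ref{thm:fgmult} and Albert's theorem. If instead $\bar\phi(M)=A(M^{\sigma})^{T}B$, then transposing the spread set turns $R(\SSS_{f,g}^d)$ into the spread set of $(\SSS_{f,g}^d)^{t}$, so $\SSS_{f',g}^d$ is isotopic to $(\SSS_{f,g}^d)^{t}$ and therefore $\SSS_{f',g}$ is isotopic to $\SSS_{f,g}^{dtd}$; this is exactly the dual spread of $\cS(\D,U,W)$ occurring in the statement.

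For the converse I would reverse these implications. An equivalence of $\cS(\D,U',W)$ with $\cS(\D,U,W)$ (respectively with its dual) gives, through Theorem \ref{thm:fgmult}, Lemma \ref{lem:psig} and Albert's theorem, an equivalence of the two spread sets $\psi_g(U)$ and $\psi_g(U')$ realised by a map $M\mapsto AM^{\sigma}B$ (respectively $M\mapsto A(M^{\sigma})^{T}B$); each such map manifestly preserves the singular matrices, so $\phi:=\psi_g^{-1}\bar\phi\,\psi_g$ lies in $\mathrm{Stab}(\tilde{\cB}(W))$ and satisfies $U'=U^{\phi}$. The two directions thus match the two Dieudonn\'e types precisely, and the only genuinely nontrivial ingredient is the classification in the second paragraph, together with the careful bookkeeping identifying the transpose type with the dual spread and handling the Frobenius twist $\sigma$ (which is absorbed by Albert's equivalence of isotopy with $\GammaL$-equivalence).
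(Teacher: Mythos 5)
Your route is the same as the paper's (transport by $\psi_g$, identification of $\tilde{\cB}(W)$ with the nonzero singular endomorphisms, the semilinear Dieudonn\'e classification, then spread sets and Albert's theorem), and your Case 1 and the derivation $\SSS_{f',g}^{d}\sim(\SSS_{f,g}^{d})^{t}$, hence $\SSS_{f',g}\sim\SSS_{f,g}^{dtd}$, are correct. The genuine gap is your closing identification ``this is exactly the dual spread of $\cS(\D,U,W)$''. Under the two facts you invoke --- Theorem \ref{thm:fgmult} read literally, so that $\cS(\D,U,W)\simeq\cS(\SSS_{f,g})$, and the correspondence from the introduction, by which the dual spread of the spread of a semifield is the spread of its transpose --- the dual of $\cS(\D,U,W)$ is $\cS(\SSS_{f,g}^{t})$, and $\cS(\SSS_{f,g}^{dtd})$ is \emph{not} equivalent to it in general: $\SSS^{dtd}\sim\SSS^{t}$ would force $\SSS^{td}\sim\SSS$, which already fails for $\GTF_{c,\alpha,\beta}$ with $\beta\neq\alpha^{2}$ (compare the rows $t$ and $dtd$ of the Knuth table with Theorem \ref{thm:isotopism_GTF}). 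So what your Case 2 actually proves is equivalence with $\cS(\SSS_{f,g}^{dtd})$, which at face value is a different alternative from the one in the statement; your converse direction has the mirror image of the same defect.

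What is missing is a left/right bookkeeping point that also dissolves the apparent contradiction. The BEL construction produces the subspaces $\{(y,\SSS_{f,g}(x,y)):y\in\Fqn\}$, i.e.\ graphs of \emph{left} multiplications; in the convention of (\ref{def:semspread}) this means $\cS(\D,U_f,W_g)$ equals $\cS(\SSS_{f,g}^{d})$, not $\cS(\SSS_{f,g})$, and its spread set is precisely $\psi_g(U_f)=R(\SSS_{f,g}^{d})$ by Lemma \ref{lem:psig}. Consequently the dual spread of $\cS(\D,U,W)$ has spread set $\widehat{\psi_g(U)}$, and the adjoint-type Dieudonn\'e maps match the ``or the dual'' alternative immediately: $\psi_g(U')=A\,\widehat{\psi_g(U)}^{\sigma}B$ says exactly that the spread set of $\cS(\D,U',W)$ is equivalent to that of the dual spread, with no passage through $\SSS^{dtd}$ and Theorem \ref{thm:fgmult} needed, and the Knuth-derivative mismatch disappears. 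You are in good company --- the paper's own chain contains the same wobble, asserting $\SSS_{h,g}^{d}\sim\SSS_{f,g}^{td}$ where the adjoint identity yields $\SSS_{h,g}^{d}\sim\SSS_{f,g}^{dt}$ --- but as written your proof, like the paper's, asserts rather than establishes the one identification on which the theorem's second alternative rests, and with the cited statements taken at face value that identification is false.
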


\begin{proof}
By Lemma \ref{lem:Uf}, we may write $U=U_f$, $U' = U_h$, and $W=W_g$. By Lemma \ref{lem:psig}, we can identify $V(n^2,q)$ with $\End_{\Fq}(\Fqn)$ via the map $\psi_g$, which is injective by Lemma \ref{lem:psiinj}. Furthermore, the set of singular endomorphisms is precisely the set $\psi_g(\tilde{\cB}(W_g))$. Then $\psi_g(U_f)$ and $\psi_g(U_h)$ are subspaces corresponding to the semifield spread sets defined by $\SSS_{f,g}^d$ and $\SSS_{h,g}^d$. 

A map $\phi$ is in $\mathrm{Stab}(\tilde{\cB}(W_g))$ if and only if $\psi_g \phi \psi_g^{-1}$ fixes the set of singular endomorphism in $\End_{\Fq}(\Fqn)$. 

Now the invertible semilinear maps preserving the set of singular endomorphisms are precisely those of the form
\[
A \mapsto XA^{\sigma}Y
\] 
or 
\[
A \mapsto X\hat{A}^{\sigma} Y
\]
for some nonsingular endomorphisms $X,Y$, and some $\sigma \in \Aut(\Fq)$. 

Then
\begin{center}
$\cS(\D,U_h,W_g) \simeq \cS(\D,U_f,W_g)$ or the dual of $\cS(\D,U_f,W_g)$ \\
$\Leftrightarrow$\\
$\cS(\SSS_{h,g})\simeq \cS(\SSS_{f,g})$ or $\cS(\SSS_{f,g}^t)$\\
$\Leftrightarrow$\\
$\SSS_{h,g}$ is isotopic to $\SSS_{f,g}$ or $\SSS_{f,g}^t$ \\
$\Leftrightarrow$\\
$\SSS_{h,g}^d$ is isotopic to $\SSS_{f,g}^d$ or $\SSS_{f,g}^{td}$ \\
$\Leftrightarrow$\\
there exist invertible endomorphisms $X,Y$ and some $\sigma \in \Aut(\Fq)$ such that $\psi_g(U_f) = X\psi_g(U_h)^{\sigma}Y$ or $\psi_g(U_f) = X\widehat{\psi_g(U_h)}^{\sigma}Y$\\
$\Leftrightarrow$\\
 there exists some $\rho \in \GammaL(\End_{\Fq}(\Fqn),q)$ fixing the set of singular endomorphisms such that $\psi_g(U_f) = \rho \psi_g(U_h)$\\
$\Leftrightarrow$\\
there is a map $\phi = \psi_g^{-1} \rho^{-1} \psi_g \in \GammaL(n^2,q)$ fixing $\tilde{\cB}(W)$ such that $U_h=U_f^{\phi}$.
\end{center}

\end{proof}

\begin{remark}
This theorem and its proof closely follow \cite{Lavrauw2011}, Theorem 17. That theorem is stated for a specific $W = W_g$, where $g = (1,\sigma,\ldots,\sigma^{n-1})$ and $\sigma(x) = x^q$ for all $x \in \Fqn$. The extension to arbitrary $W$ is trivial. However there is a small error in that theorem, which we rectify here. 

The inaccuracy occurs on page 909: in the line ``In particular, $\varphi$ fixes both families of maximal subspaces of the Segre variety, and hence, by Corollary 12, $\phi$ fixes $\cB(W)$.'', the $\cB(W)$ should instead read $\tilde{\cB}(W)$. 

For the converse then, again $\cB(W)$ should be replaced by $\tilde{\cB}(W)$, and hence rather than implying an isotopy between the two semifields under consideration, it implies that the first is either isotopic to the second, or isotopic to the transpose of the second.

The above proof is self contained, and does not rely on Theorem 17 of \cite{Lavrauw2011}, though the method is the same.
\end{remark}

\section{BEL-configurations in $V(2n,q)$}

\subsection{Action of the stabilizer of $\tilde{B}(W)$}

We showed in the previous section that if $(\D,U,W)$ is a BEL-configuration in $V(n^2,q)$, and $W$ does not contain an element of $\D$, then the elements of the set $\{\cS(\D,U^{\phi},W):\phi \in \mathrm{Stab}\tilde{B}(W)\}$ are all equivalent to $\cS(\D,U,W)$ or $\cS(\D,U,W)^{\epsilon}$, and so we do not get an extension of the Knuth orbit in this case.

However, when $r<n$, the situation is different. Firstly, it is not clear whether if $\cS(\D,U,W)$ and $\cS(\D,U',W)$ are equivalent then there must exist a $\phi$ fixing $\tilde{\cB}(W)$ such that $U'=U^{\phi}$. Conversely, we will now present an example in $V(2n,q)$ where $\cS(\D,U^{\phi},W)$ is not equivalent to any Knuth derivative of $\cS(\D,U,W)$. The example will concern generalized twisted fields.

%

%
%

Let $r=2$, and let $g = (1,-\beta)$, where $\beta \in \Aut(\Fqn)$, and $\beta$ has fixed field $\FF_{q^{n/t}}$. Put
\begin{eqnarray}
W := W_g = \{(x^{\beta},x):x \in \Fqn\}.
\end{eqnarray}
In \cite{LaShZa2013}, this subspace and the set $\tilde{\cB}(W)$ was investigated, and the following properties proved. Recall that we are assuming the Desarguesian spread $\D=\D_{2,n,q}$.

In what follows $N$ denotes the norm function from $\Fqn$ to $\FF_{q^{n/t}}$, i.e. $N(a) = aa^{\beta}\ldots a^{\beta^{t-1}}$.

\begin{theorem}
(i) $\tilde{\cB}(W)$ is a nonsingular hypersurface of degree $n$, defined by
\[
\tilde{\cB}(W) = \{(a,b):a,b \in \Fqn \mid N(a)=N(b) \ne 0\}.
\]
(ii) The subspaces of dimension $n$ contained in $\tilde{\cB}(W)$ are precisely those of the form $S_{\gamma,k} := \{(x,kx^{\gamma}):x \in \Fqn\}$, where $k\in \Fqn$, $N(k)=1$ and $\gamma \in \Aut(\Fqn:\FF_{q^{n/t}})$. These can be divided into $t$ systems which each partition $\tilde{\cB}(W)$, by defining $\mathcal{S}_{\gamma} := \{S_{\gamma,k}: k\in \Fqn | N(k)=1\}$ for each $\gamma \in \Aut(\Fqn:\FF_{q^{n/t}})$.

(iii) Let $G$ denote the setwise stabilizer of $\tilde{\cB}(W)$ in $\GammaL(2n,q)$. Then $G$ contains the elements
\begin{align*}
\phi_{k,m,\gamma,\delta}:(a,b) &\mapsto (ka^{\gamma},mb^{\delta});\\
\phi'_{k,m,\gamma,\delta}:(a,b) &\mapsto (kb^{\gamma},ma^{\delta})
\end{align*}
for $\gamma,\delta \in \Aut(\Fqn:\FF_{q^{n/t}})$, $k,m \in \Fqn$ such that $N(k)=N(m) \ne 0$. These form a subgroup of $G$. If $n$ is prime, then $G$ contains no other elements.
\end{theorem}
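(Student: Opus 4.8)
The set-theoretic descriptions and the verification halves of (ii)--(iii) are direct computations, whereas the two classification statements --- that the $S_{\gamma,k}$ exhaust the maximal subspaces, and that $G$ acquires nothing beyond the listed maps when $n$ is prime --- carry the real weight. For (i) I would start from the definition: $(a,b)\in\tilde{\cB}(W)$ iff the spread line $\cB((a,b))$ meets $W=\{(x^{\beta},x):x\in\Fqn\}$, i.e. iff $\alpha a=(\alpha b)^{\beta}=\alpha^{\beta}b^{\beta}$ for some $\alpha\in\Fqn^{\times}$. For $b\ne0$ this rearranges to $a/b^{\beta}=\alpha^{\beta-1}$, and Hilbert's Theorem~90 for the cyclic extension $\Fqn/\FF_{q^{n/t}}$ (Galois group $\langle\beta\rangle$) identifies $\{\alpha^{\beta-1}:\alpha\in\Fqn^{\times}\}$ with the kernel of $N$; so solvability is equivalent to $N(a/b^{\beta})=1$, and since $N(b^{\beta})=N(b)$ this reads $N(a)=N(b)$. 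The case $b=0$ forces $a=0$, giving the stated nonzero locus; the geometric descriptors (nonsingularity, degree) I would read off the norm form, which splits into linear factors over $\overline{\F}_q$.

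For (ii), the inclusion $S_{\gamma,k}^{\times}\subseteq\tilde{\cB}(W)$ is a computation: with $\gamma\in\Aut(\Fqn:\FF_{q^{n/t}})$, so $\gamma=\beta^{j}$ commutes with $\beta$ and fixes $\FF_{q^{n/t}}$, one has $N(x^{\gamma})=N(x)^{\gamma}=N(x)$, hence $N(kx^{\gamma})=N(k)N(x)=N(x)$ exactly when $N(k)=1$; and $x\mapsto(x,kx^{\gamma})$ is injective $\Fq$-linear, so $\dim S_{\gamma,k}=n$. For the partition claim, given $(a,b)$ with $N(a)=N(b)\ne0$ and $\gamma$ fixed, the only candidate is $k=b/a^{\gamma}$, which automatically satisfies $N(k)=N(b)/N(a)=1$; thus each $\mathcal{S}_{\gamma}$ partitions $\tilde{\cB}(W)$, and the $t$ choices of $\gamma$ give $t$ systems. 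That these are \emph{all} the $n$-dimensional subspaces of the hypersurface is the substantive part, which I would obtain from the classification of maximal linear subspaces of the (over $\overline{\F}_q$, split) norm hypersurface, as in \cite{LaShZa2013}.

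For (iii), membership and closure come first: for $\phi_{k,m,\gamma,\delta}:(a,b)\mapsto(ka^{\gamma},mb^{\delta})$ one computes $N(ka^{\gamma})=N(k)N(a)$ and $N(mb^{\delta})=N(m)N(b)$, so $N(a)=N(b)\ne0$ is preserved precisely when $N(k)=N(m)\ne0$; the swapped maps $\phi'_{k,m,\gamma,\delta}$ are identical up to interchanging the two coordinates, and multiplying out these expressions shows the two families are closed under composition and inversion, hence form a subgroup of $G$. For the converse when $n$ is prime, note $t\mid n$ forces $t=n$ (the case $t=1$ being degenerate), so $N=N_{\Fqn/\Fq}$ and $\Aut(\Fqn:\FF_{q^{n/t}})$ is the full Galois group. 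The plan: any $\phi\in G$ permutes the maximal subspaces, hence --- since the systems $\mathcal{S}_{\gamma}$ are intrinsic, being the maximal families of pairwise trivially-intersecting maximal subspaces --- permutes the $n$ systems; analysing this permutation alongside the complementary axis subspaces $A_{0}=\{(a,0)\}$, $A_{\infty}=\{(0,b)\}$ forces $\phi$ to fix or interchange $A_{0}$ and $A_{\infty}$. Modulo the swap (a $\phi'$) one reduces to a block-diagonal semilinear $(a,b)\mapsto(\psi_{1}(a),\psi_{2}(b))$ with $N(a)=N(b)\Rightarrow N(\psi_{1}a)=N(\psi_{2}b)$, and a rigidity argument --- a semilinear map whose image-norm depends only on the norm of its argument is monomial --- pins $\psi_{1},\psi_{2}$ to $a\mapsto ka^{\gamma}$, $b\mapsto mb^{\delta}$.

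I expect the decisive obstacle to be this last step, namely controlling \emph{all} of $G$ rather than just exhibiting a large subgroup. The delicate points are the intrinsic characterisation of the systems $\mathcal{S}_{\gamma}$ that forces $\phi$ to respect them, the role of the primality of $n$ in ruling out the intermediate systems that would otherwise enlarge $G$, and the monomiality rigidity for norm-compatible semilinear maps; everything in (i) and in the first half of (iii) is routine once Hilbert~90 is available.
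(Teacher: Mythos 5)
You could not have known this, but the paper contains no proof of this theorem to compare against: it is imported verbatim from \cite{LaShZa2013} (the sentence preceding it reads ``In \cite{LaShZa2013}, this subspace and the set $\tilde{\cB}(W)$ was investigated, and the following properties proved''). So your proposal must stand as a self-contained argument. Judged that way, the computational halves are correct: the Hilbert~90 identification in (i) (solvability of $\alpha a=\alpha^{\beta}b^{\beta}$ if and only if $N(a/b^{\beta})=1$, i.e. $N(a)=N(b)\neq 0$), the inclusion $S_{\gamma,k}^{\times}\subseteq\tilde{\cB}(W)$ and the partition statement in (ii) via the unique candidate $k=b/a^{\gamma}$, and the verification in (iii) that the listed maps preserve the hypersurface precisely when $N(k)=N(m)\neq 0$ and form a subgroup.

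The claims that carry the weight, however, are not proved. First, your treatment of completeness in (ii) is circular: you defer to ``the classification of maximal linear subspaces of the norm hypersurface, as in \cite{LaShZa2013}'', but that classification \emph{is} the statement being proved. What is actually needed is: any $n$-dimensional subspace of the hypersurface meets $\{(0,b):b\in\Fqn\}$ trivially, hence is a graph $\{(x,h(x))\}$ with $h$ bijective $\Fq$-linear and $N(h(x))=N(x)$ for all $x$, and one must show all such $h$ have the form $x\mapsto kx^{\gamma}$. Second, in (iii) the intrinsicness of the systems $\mathcal{S}_{\gamma}$ is asserted, not proved, and it is exactly where primality lives: writing $\gamma=\beta^{j}$, $\gamma'=\beta^{j'}$, one has $S_{\gamma,k}\cap S_{\gamma',k'}\neq 0$ if and only if $k/k'$ lies in the image of $x\mapsto x^{q^{j'}-q^{j}}$, a subgroup of index $q^{\gcd(j'-j,n)}-1$; for $n$ prime this index is $q-1$, the image is the norm-one subgroup, so subspaces in \emph{distinct} systems always meet and ``equal or disjoint'' is an equivalence relation whose classes are the systems --- for composite $n$ this fails. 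Your next step, that $\phi$ must fix or swap $A_{0}$ and $A_{\infty}$, is unsupported as stated: disjointness from $\tilde{\cB}(W)$ does not single out these two subspaces, since \emph{every} $U$ completing a BEL-configuration with $W$ is disjoint from $\tilde{\cB}(W)$ --- that is the premise of the whole paper. The closing ``monomiality rigidity'' is essentially the unproved claim (a) again; note also that maps such as $(a,b)\mapsto(a^{p},b^{p})$ lie in the $\GammaL(2n,q)$-stabilizer but are not of the listed form when $q^{n/t}>p$, so any rigidity statement has to be formulated over the prime field with care. Finally, a genuine error in (i): the degree-$n$ form is $N_{\FF_{q^{n/t}}/\Fq}\bigl(N(a)-N(b)\bigr)$, which over $\overline{\F}_q$ splits into $n/t$ conjugate irreducible factors of degree $t$, not into linear factors; for $t<n$ the hypersurface is even reducible over $\overline{\F}_q$, so nonsingularity must be interpreted over $\Fq$ and established by a separate computation rather than ``read off''.
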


So we can consider BEL-configurations of the form $(\D,U,W)$ in $V(2n,q)$, and investigate the action of $G$. If $U=U_f$, then $\cS(\D,U_f,W)$ is equivalent to $\cS(\SSS_{f,g})$, and
\[
\SSS_{f,g}(x,y) = f_1(x)y -(f_2(x)y)^{\beta}.
\]

Let $\phi= \phi_{1,1,1,\gamma}$, that is $\phi((a,b)) = (a,b^{\gamma})$, where $\gamma \in \Aut(\Fqn:\Fq)$. By the above, $\phi \in G$.  Then $U_f^{\phi} = U_h$, with $h = (f_1,\gamma \circ f_2)$. Hence $\cS(\D,U_f^{\phi},W) = \cS(\D,U_h,W)$, which is equivalent to $\cS(\SSS_{h,g})$, and
\[
\SSS_{h,g}(x,y) = f_1(x)y -(f_2(x)^{\gamma}y)^{\beta}.
\]
Now let $f= (1,c^{1/\beta} \alpha/\beta)$, so $U_f= \{(x,c^{1/\beta} x^{\alpha/\beta}):x \in \Fqn\}$. Then
\[
\SSS_{f,g}(x,y) = xy-cx^{\alpha}y^{\beta},
\]
and so $\SSS_{f,g}(x,y) = \mathrm{GTF}_{c,\alpha,\beta}$. Then 
\[
\SSS_{h,g} = xy-c^{\gamma}x^{\gamma \alpha}y^{\beta},
\]
and so $\SSS_{h,g} = \GTF_{c^{\gamma},\alpha\gamma,\beta}$.

It is clear from Theorem \ref{thm:isotopism_GTF} that in general these are nonisotopic (pre)semifields. In fact, if $\gamma \alpha = \beta$, then we get a presemifield isotopic to a finite field.

So this action can indeed produce nonisotopic semifields, or equivalently nonequivalent semifield spreads.


Note that we may also allow $G$ to act on $W_g$, and still retain the BEL property. If we take $\phi = \phi_{k,m,\gamma,\delta}$, $\phi' = \phi_{k',m',\gamma',\delta'}$, then after some calculations we see that 

\begin{align}
\cS(\D,U_f,W_g) &\simeq \cS(\mathrm{GTF}(c,\alpha,\beta))\\
\cS(\D,U_f^{\phi},W_g^{\phi'}) &\simeq \cS(\mathrm{GTF}(c',\alpha',\beta'))
\end{align}

where $\alpha' = \frac{\alpha\gamma'\delta}{\delta'\gamma}$, $\beta' = \frac{\beta\gamma'}{\delta'}$, and
\[
c' = \frac{k'}{m'}\frac{m^{\beta'}}{k^{\alpha'}}c^{\beta'\delta/\beta}.
\]

Hence from the single BEL-configuration $(\D,U_f,W_g)$ corresponding to the generalized twisted field $\GTF_{c,\alpha,\beta}$, we can produce all generalized twisted fields $\GTF_{c',\alpha',\beta'}$ such that $N(c)=N(c')$, as well as the finite field.

\subsection{A group of order $8$ acting on equivalence classes $[U,W]$}
\label{sec:belrank2}
In \cite{BEL2007}, it was noted that if $(\D,U,W)$ is a BEL-configuration in $V(2n,q)$, then $\dim(U)=\dim(W) = n$, and condition 2 of Lemma \ref{lem:belprop} becomes symmetric in $U$ and $W$. Hence we have the following.
\begin{lemma}
In $V(2n,q)$, $(\D,U,W)$ is a BEL-configuration if and only if $(\D,W,U)$ is a BEL-configuration.
\end{lemma}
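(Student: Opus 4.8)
The plan is to prove this by directly invoking the symmetry of condition 2 of Lemma \ref{lem:belprop}. Recall that Lemma \ref{lem:belprop} states that $(\D,U,W)$ is a BEL-configuration in $V(rn,q)$ if and only if $\cB(U) \cap \cB(W) = \emptyset$, where $\cB(U)$ denotes the set of spread elements meeting $U$ nontrivially and similarly for $W$. The crucial observation, as already noted by the authors, is that in the special case $V(2n,q)$ we have $\dim(U) = \dim(W) = n$, so the roles of $U$ and $W$ are entirely interchangeable in this condition.

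First I would note that the defining condition $\cB(U)\cap\cB(W)=\emptyset$ is literally symmetric as a set-theoretic statement: the intersection of two sets is empty if and only if the intersection of the same two sets, taken in the other order, is empty. That is, $\cB(U) \cap \cB(W) = \cB(W) \cap \cB(U)$. Therefore the condition ``$\cB(U)\cap\cB(W)=\emptyset$'' holds if and only if the condition ``$\cB(W)\cap\cB(U)=\emptyset$'' holds, which is precisely the defining condition for $(\D,W,U)$ to be a BEL-configuration.

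The only thing one needs to verify beyond this symmetry is that the dimension hypotheses in Definition \ref{def:BEL} are also satisfied after swapping. In general $V(rn,q)$ these differ ($U$ has dimension $n$ while $W$ has dimension $rn-n$), which is exactly why the statement is restricted to $V(2n,q)$. When $r=2$ we have $rn - n = n$, so both subspaces have dimension $n$; hence $U$ qualifies as a subspace of dimension $n$ and $W$ qualifies as a subspace of dimension $rn-n=n$ in either ordering. This is the content of the parenthetical remark preceding the lemma.

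I do not expect any genuine obstacle here, since the lemma is explicitly flagged as a trivial consequence of the symmetry that arises only when $r=2$. The entire argument amounts to combining the dimension coincidence $rn-n=n$ with the evident symmetry of set intersection in condition 2 of Lemma \ref{lem:belprop}. The one point to state carefully is that for general $r$ the swap fails precisely because the two dimension constraints differ, so the restriction to $V(2n,q)$ is essential and should be mentioned to make the proof honest.
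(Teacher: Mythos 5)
Your proof is correct and is essentially identical to the paper's own justification: the authors note that in $V(2n,q)$ one has $\dim(U)=\dim(W)=n$, so the dimension requirements of Definition \ref{def:BEL} survive the swap, and condition 2 of Lemma \ref{lem:belprop} ($\cB(U)\cap\cB(W)=\emptyset$) is symmetric in $U$ and $W$. Nothing further is needed.
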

This operation is known as \emph{switching}. We denote $\cS(\D,W,U) = \cS(\D,U,W)^s$. Taking $\D=\D_{r,n,q}$, we similarly define an action on the equivalence classes defined in Remark \ref{rem:belop1} by $[U,W]^s := [W,U]$. Note that it remains to be shown that this is well defined on equivalence classes of semifield spreads.

Together with the transpose operation defined in (\ref{eqn:[U,W]^t}), this gives a group of order $4$ acting on equivalence classes $[U,W]$. In this section, we will define a new operation $e$, which will extend this to a group of order $8$.

First, we show how we can compute a multiplication corresponding to $\cS(\D,W,U)$. This was calculated in \cite{BEL2007} only for the class of semifields which are two dimensional over a nucleus. We will again assume $\D=\D_{r,n,q}$ for the remainder of this section.

\begin{theorem}
Let $\D=\D_{r,n,q}$. Suppose $(\D,U,W)$ is a BEL-configuration in $V(2n,q)$. Then there exist $a,b \in \End_{\Fq}(\Fqn)$ such that $\cS(\D,U,W)$ is equivalent to $\cS(\SSS)$, where
\[
\SSS(x,y) = xy+b(a(x)y),
\]
and $\cS(\D,U,W)^s$ is equivalent to $\cS(\SSS')$, where
\[
\SSS'(x,y) = xy+a(b(x)y).
\]
\end{theorem}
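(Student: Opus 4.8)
The plan is to exploit the switching symmetry of BEL-configurations in $V(2n,q)$ together with the explicit multiplication formula from Theorem~\ref{thm:fgmult}. First I would use Lemma~\ref{lem:Uf} to write $U = U_f$ and $W = W_g$ for some pairs $f=(f_1,f_2)$ and $g=(g_1,g_2)$, so that by Theorem~\ref{thm:fgmult} the spread $\cS(\D,U,W)$ is equivalent to $\cS(\SSS_{f,g})$ with multiplication $\SSS_{f,g}(x,y)=g_1(f_1(x)y)+g_2(f_2(x)y)$. The key reduction is to put this into a normalized form: since $(\D,U,W)$ is a BEL-configuration in $V(2n,q)$, neither $U$ nor $W$ can contain (or be contained in the $\Fqn$-span related to) a spread element, and this nondegeneracy lets me arrange, after acting by a suitable $\phi\in\GammaL(2,q^n)$ (which preserves equivalence by Remark~\ref{rem:belop1}), that one of the endomorphisms in each tuple is the identity. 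Concretely I expect to reduce to $g=(\id,b')$ and $f=(\id,a')$ for appropriate endomorphisms, or some close variant, using that $g_1$ and $f_1$ (say) are invertible and precomposing/postcomposing by $GL(n,\Fq)$ does not change $U_f$ or $W_g$ as noted after Lemma~\ref{lem:Uf}.

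Once the configuration is normalized so that $U=U_{(\id,a)}$ and $W=W_{(\id,b)}$ for some $a,b\in\End_{\Fq}(\Fqn)$, the multiplication becomes
\[
\SSS_{f,g}(x,y) = \id(\id(x)y)+b(a(x)y) = xy+b(a(x)y),
\]
which is exactly the desired form for $\SSS$. For the switched configuration, the switching operation interchanges the roles of $U$ and $W$, so $\cS(\D,U,W)^s = \cS(\D,W,U) = \cS(\D,U_{(\id,b)},W_{(\id,a)})$ once one checks that the normalized $W=W_{(\id,b)}$, when read as a $U$-type subspace, is $U_{(\id,b)}$, and similarly $U=U_{(\id,a)}$ read as a $W$-type subspace is $W_{(\id,a)}$. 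Applying Theorem~\ref{thm:fgmult} again to the configuration $(\D,U_{(\id,b)},W_{(\id,a)})$ then yields
\[
\SSS'(x,y) = xy + a(b(x)y),
\]
as claimed.

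The main obstacle I anticipate is the normalization step: verifying that a single BEL-configuration in $V(2n,q)$ can genuinely be brought to the symmetric form in which \emph{both} $f_1$ and $g_1$ are the identity, using only the allowed group $\GammaL(2,q^n)$ and the harmless $GL(n,\Fq)$ precompositions. I would need to check that the relevant components are invertible — this should follow from the BEL property via Lemma~\ref{lem:belprop}, since if, say, $g_1$ were singular one could produce an element of $U^{\times}$ that is an $\Fqn$-multiple of an element of $W^{\times}$, contradicting the configuration being BEL. The delicate point is ensuring the \emph{same} pair $(a,b)$ governs both $U$ and $W$ so that switching really does just transpose $a$ and $b$ in the formula; making the identification $U_f\leftrightarrow W_g$ explicit under switching, and tracking how the field-reduction group action moves $f$ and $g$ in tandem, is where the careful bookkeeping lies. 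The rest is a direct substitution into Theorem~\ref{thm:fgmult}.
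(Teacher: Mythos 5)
Your strategy coincides with the paper's: normalize by the $\GammaL(2,q^n)$-action so that $U=U_{(\id,a)}$ and $W=W_{(\id,b)}$, read off $\SSS(x,y)=xy+b(a(x)y)$ from Theorem \ref{thm:fgmult}, then re-express each subspace in the other representation and apply Theorem \ref{thm:fgmult} to the switched configuration. But the re-expression step --- the one you yourself singled out as ``where the careful bookkeeping lies'' --- is carried out incorrectly: the identification you assert is false. Indeed
\[
W_{(\id,b)}=\{(x_1,x_2)\,:\, x_1+b(x_2)=0\}=\{(-b(t),t)\,:\,t\in\Fqn\}=U_{(-b,\id)},
\]
and this equals $U_{(\id,b)}=\{(t,b(t))\,:\,t\in\Fqn\}$ only when $b(b(t))=-t$ for all $t$; for instance $b=\id$ in odd characteristic gives $\{(-t,t)\}\neq\{(t,t)\}$. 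Likewise $U_{(\id,a)}=W_{(a,-\id)}$, not $W_{(\id,a)}$. With the correct identifications the switched configuration is $(\D,U_{(-b,\id)},W_{(a,-\id)})$, and Theorem \ref{thm:fgmult} gives the multiplication $a(-b(x)y)-xy=-(xy+a(b(x)y))$, the \emph{negative} of the formula you want. This is not fatal, but it needs an extra observation which your write-up skips precisely because the false identification produced the clean sign directly: since $-\SSS'(x,y)=\SSS'(x,-y)$, the multiplications $-\SSS'$ and $\SSS'$ yield the same set of subspaces $A_y$, hence the same spread, so $\cS(\D,W,U)\simeq\cS(\SSS')$ as required. (The paper absorbs this sign silently in writing $\cS(\SSS_{(-b,1),(a,-1)})=\cS(\SSS')$.) So your conclusion is correct, but the step as written would fail any direct check and must be repaired as above.

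A secondary slip is in your justification of the normalization. Invertibility of the first components is not a consequence of the BEL property, so your appeal to Lemma \ref{lem:belprop} has the quantifiers backwards: for example $(\D,S_\infty,S_0)$ with $S_\infty=U_{(0,\id)}$ and $S_0=W_{(0,\id)}$ is a BEL-configuration (it yields the field) in which $f_1=g_1=0$. What is true is that you can \emph{choose} $\phi$ so that invertibility holds afterwards: $\cB(U)$ and $\cB(W)$ are nonempty and disjoint, so pick $T_1\in\cB(W)$ and $T_2\in\cB(U)$ (automatically distinct) and use the $2$-transitivity of $\GammaL(2,q^n)$ on $\D$ to send $T_1\mapsto S_\infty$ and $T_2\mapsto S_0$; then $U^{\phi}\cap S_\infty=W^{\phi}\cap S_0=\{0\}$, which is exactly the condition that the first components of the resulting $f$ and $g$ are invertible, and the $\GL(n,\Fq)$-freedom $U_{fA}=U_f$, $W_{Bg}=W_g$ then normalizes both to the identity. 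This is how the paper's ``without loss of generality'' should be read.
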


\begin{proof}
Recall that $\cS(\D,U,W) \simeq \cS(\D,U^{\phi},W^{\phi})$ for any $\phi \in \GammaL(2,q^n)$. Hence without loss of generality, we may assume that $U$ intersects $S_{\infty} = \{(0,x):x \in \Fqn\}$ trivially, and $W$ intersects $S_0 = \{(x,0):x \in \Fqn\}$ trivially. Thus we may assume $U= U_{(1,a)}$, and $W = W_{(1,b)}$ for some endomorphisms $a,b$. Hence by Theorem \ref{thm:fgmult}, $\cS(\D,U,W)$ is equivalent to $\cS(\SSS_{(1,a),(1,b)}) = \cS(\SSS)$.

Now it is straightforward to check that $U=W_{(a,-1)}$ and $W=U_{(-b,1)}$. Hence by Theorem \ref{thm:fgmult}, $\cS(\D,U,W)^s = \cS(\D,W,U)$ is equivalent to $\cS(\SSS_{(-b,1),(a,-1)}) = \cS(\SSS')$, completing the proof.
\end{proof}

Hence in order to classify semifields which can be constructed from a BEL-configuration with $r=2$, it suffices to consider pairs of endomorphisms $(a,b)$ such that
\[
\frac{b(a(x)y)}{xy} \ne -1 ~ \forall x,y \in \Fqn^{\times}.
\]

Note that if $q>2$, we may also assume that $U$ intersects $S_0$ trivially and $W$ intersects $S_{\infty}$ trivially, and hence that $a$ and $b$ are invertible. A simple counting argument shows that if $q>2$, there exist two elements of $\D$ not contained in $B(U)\cup B(W)$, and as $\GammaL(2,q^n)$ acts $2$-transitively on $\D$, the assertion holds. 

Now we will define a new operation on pairs of subspaces of dimension $n$ in $V(2n,q)$ which preserves the BEL property. Consider condition 5 of Lemma \ref{lem:belprop}, which says that no nonzero vectors $u \in U$ and $w \in W$ are $\Fqn$-multiples of each other. This implies that
\[
\det \npmatrix{u_1&u_2\\w_1&w_2} \ne 0
\]
for all $0 \ne u=(u_1,u_2) \in U$, $0 \ne w=(w_1,w_2) \in W$.  

As before, we may take $U=U_{(1,a)}$, $W=U_{(b,-1)}$. Then
\[
\det \npmatrix{u_1&u_2\\w_1&w_2} = \det \npmatrix{x&a(x)\\b(y)&-y} = -(xy+a(x)b(y)) \ne 0
\]
for all $x,y \in \Fqn^{\times}$. Hence this above formula defines a (pre)semifield multiplication.

Removing the minus sign, and taking the transpose-dual of this, we get that
\[
\SSS'(x,y) = xy + \hat{a}(b(x)y)
\]
defines a (pre)semifield multiplication.

Recall the symplectic polarity $\epsilon$ defined on $V(2n,q)$ in (\ref{def:epsilon}). Then it can be seen that $U^{\epsilon} = U_{(1,\hat{a})} = W_{(\hat{a},-1)}$, and so $\cS(\SSS') \simeq \cS(\D,W,U^{\epsilon})$, and $\cS(\D,W,U^{\epsilon})$ is a BEL-configuration. Hence, switching $W$ and $U^{\epsilon}$, we get that $(\D,U^{\epsilon},W)$ is a BEL-configuration, and we have proved the following.

\begin{theorem}
Suppose $(\D,U,W)$ is a BEL-configuration in $V(2n,q)$. Then $(\D,U^{\epsilon},W)$ is also a BEL-configuration.
\end{theorem}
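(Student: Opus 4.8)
The plan is to show that applying the symplectic polarity $\epsilon$ to one of the two subspaces in a BEL-configuration preserves the BEL property, by reducing everything to an explicit computation in terms of endomorphisms and the condition of Lemma \ref{lem:belprop}(5). Since $\epsilon$ is the polarity attached to the form $b_\epsilon((a,b),(c,d)) = \tr(ad-bc)$ on $V(2n,q)$, and we have already worked out in the discussion preceding the statement how $\epsilon$ acts on the standard subspaces, the argument should be essentially a matter of assembling those computations. First I would invoke the freedom $\cS(\D,U,W)\simeq\cS(\D,U^\phi,W^\phi)$ for $\phi\in\GammaL(2,q^n)$ to normalize, so that without loss of generality $U=U_{(1,a)}$ for some $a\in\End_{\Fq}(\Fqn)$; as noted in the excerpt this is permissible whenever $U$ meets $S_\infty$ trivially.

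Next I would record the explicit effect of $\epsilon$ on $U_{(1,a)}$. The key identity, already established in the text just before the statement, is
\[
U^\epsilon = U_{(1,\hat{a})} = W_{(\hat{a},-1)}.
\]
With this in hand, the claim that $(\D,U^\epsilon,W)$ is a BEL-configuration is equivalent, via Lemma \ref{lem:belprop}(5), to the statement that no nonzero vector of $U^\epsilon=U_{(1,\hat a)}$ is an $\Fqn$-multiple of a nonzero vector of $W$. Writing $W=W_{(1,b)}=U_{(b,-1)}$ (the identification $W_{(1,b)}=U_{(b,-1)}$ being exactly the one used in the preceding theorem), this multiple-freeness translates, as in the determinant computation already carried out in the excerpt, into the nonvanishing of
\[
\det\npmatrix{x & \hat{a}(x)\\ b(y) & -y} = -\bigl(xy + \hat{a}(x)b(y)\bigr)
\]
for all $x,y\in\Fqn^\times$. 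So the whole statement comes down to verifying that $xy+\hat a(x)b(y)$ never vanishes for nonzero $x,y$.

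I would then supply that nonvanishing by tying it back to the BEL property of the original configuration rather than proving it afresh. The excerpt has already shown that $xy+a(x)b(y)\ne 0$ for all $x,y\in\Fqn^\times$ is exactly the condition that $(\D,U_{(1,a)},U_{(b,-1)})$ is a BEL-configuration; applying the transpose-dual construction (equivalently, passing to the adjoint, using $\hat{\hat a}=a$ and $\widehat{fg}=\hat g\hat f$ from the adjoint discussion in the Introduction) converts the expression $xy+a(x)b(y)$ into $xy+\hat a(b(x)y)$, which is again a presemifield multiplication and hence has no zero divisors. This is precisely the content of the sentence ``Removing the minus sign, and taking the transpose-dual of this, we get that $\SSS'(x,y)=xy+\hat a(b(x)y)$ defines a (pre)semifield multiplication.'' Reading that nonvanishing through the chain $\cS(\SSS')\simeq\cS(\D,W,U^\epsilon)$ gives that $(\D,W,U^\epsilon)$ is a BEL-configuration, and finally an application of the switching lemma (Lemma: in $V(2n,q)$, $(\D,U,W)$ is BEL iff $(\D,W,U)$ is BEL) interchanges the two subspaces to yield that $(\D,U^\epsilon,W)$ is a BEL-configuration, as claimed.

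The bulk of the work is therefore bookkeeping: correctly tracking the identifications $U_{(1,a)}=W_{(a,-1)}$ and their $\epsilon$-images, and keeping the adjoint operation consistent throughout. The one genuine obstacle I anticipate is the normalization step: one must be sure that the reduction to $U=U_{(1,a)}$ (and, if needed, to invertible $a$) does not lose generality, and in particular that the $\GammaL(2,q^n)$-transformation used to normalize $U$ does not disturb the already-normalized form of $W$ in a way that would invalidate the clean identification $W=U_{(b,-1)}$. Since $\epsilon$ is $\GammaL(2,q^n)$-equivariant only up to the equivalences recorded in Remark \ref{rem:belop1}, I would double-check that the equivalence $\cS(\D,U,W)\simeq\cS(\D,U^\phi,W^\phi)$ survives the application of the polarity, so that proving the BEL property in the normalized coordinates genuinely establishes it in general.
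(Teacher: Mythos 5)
Your proposal is correct and follows essentially the same route as the paper's own proof: normalize to $U=U_{(1,a)}$, $W=W_{(1,b)}=U_{(b,-1)}$, extract the presemifield condition $xy+a(x)b(y)\ne 0$ from Lemma \ref{lem:belprop}(5), pass to the transpose-dual $xy+\hat{a}(b(x)y)$, identify its spread with $\cS(\D,W,U^{\epsilon})$ via $U^{\epsilon}=U_{(1,\hat{a})}=W_{(\hat{a},-1)}$, and finish with the switching lemma. The only differences are cosmetic: your intermediate determinant reformulation of the target (nonvanishing of $xy+\hat{a}(x)b(y)$) is never used directly since you conclude, as the paper does, through the spread identification and switching, and your flagged worry about compatibility of the normalization with $\epsilon$ is a point the paper itself passes over silently.
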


As $M^{\phi\epsilon}=M^{\epsilon\phi}$ for all $\phi \in \GammaL(2,q^n)$, we can define an operation $e$ on the equivalence classes defined in Remark \ref{rem:belop1} by $[U,W] \mapsto [U^{\epsilon},W]$.

Hence we have shown that the following three actions preserve the BEL property.
\begin{align*}
t &:[U,W] \mapsto [W^{\perp},U^{\perp}]\\
s &:[U,W] \mapsto [W,U]\\
e &:[U,W] \mapsto [U^{\epsilon},W].
\end{align*}
Each of these operations are involutions, and together they define a group of order $8$, as they satisfy $ese=t$, $st=ts$. Hence from a single BEL-configuration in $V(2n,q)$, we can produce up to $4$ Knuth orbits, and hence up to $24$ isotopy classes.

We now present a table containing a multiplication representing each of these isotopy classes. We will let $U=U_{(1,a)}=W_{(a,-1)}$, and $W = W_{(1,b)} = U_{(b,-1)}$. 

\begin{center}

\begin{tabular}{|c|c|c|c|c|}
\hline
$\SSS$ & $\id$ &$s$& $e$ & $es$\\
\hline
$id$& $xy+b(a(x)y)$ & $xy+a(b(x)y)$ & $xy+b(\hat{a}(x)y)$ & $xy+\hat{b}(\hat{a}(x)y)$\\
\hline
$t$& $xy+\hat{a}(\hat{b}(x)y)$ & $xy+\hat{a}(b(x)y)$ & $xy+a(\hat{b}(x)y)$ & $xy+\hat{b}(a(x)y)$\\
\hline
$d$& $xy+b(xa(y))$ & $xy+a(xb(y))$ & $xy+b(x\hat{a}(y))$ & $xy+\hat{a}(xb(y))$\\
\hline
$td$& $xy+\hat{a}(x\hat{b}(y))$ & $xy+\hat{b}(x\hat{a}(y))$ & $xy+a(x\hat{b}(y))$ & $xy+\hat{b}(xa(y))$\\
\hline
$dt$& $xy+\hat{b}(x)a(y)$ & $xy+\hat{a}(x)b(y)$ & $xy+\hat{b}(x)\hat{a}(y)$ & $xy+a(x)b(y)$\\
\hline
$dtd$& $xy+a(x)\hat{b}(y)$ & $xy+b(x)\hat{a}(y)$ & $xy+\hat{a}(x)\hat{b}(y)$ & $xy+b(x)a(y)$\\
\hline
\end{tabular}
\end{center}

Here the element in the top row is applied first, e.g. the multiplication in the third row, second column corresponds to $(S(U,W)^s)^d$. Note that $(S(U,W)^d)^s$ is undefined, as $s$ (and $e$) are only defined on BEL-configurations. To incorporate $d$ to form a single group action, we would require a way to find $(U',W')$ such that $S(U',W') \simeq S(U,W)^d$. As yet we have no such method.

\begin{example}
Let $b(x) = -x^{\beta}$, $a(x)= c^{1/\beta}x^{\alpha/\beta}$. 

\begin{center}
\begin{tabular}{|c|c|c|c|c|}
\hline
$\SSS$ & $\id$ &$s$& $e$ & $es$\\
\hline
$id$& $(c,\alpha,\beta)$ & $(c^{1/\beta},\alpha,\alpha/\beta)$ & $(c^{\beta/\alpha},\beta^2/\alpha,\beta)$ & $(c^{1/\alpha},\beta^2/\alpha,\beta/\alpha)$\\
\hline
$t$& $(c^{1/\alpha},1/\alpha,\beta/\alpha)$ & $(c^{1/\alpha\beta},1/\alpha,1/\beta)$ & $(c^{1/\beta},\alpha/\beta^2,\alpha/\beta)$ & $(c^{1/\beta^2},\alpha/\beta^2,1/\beta)$\\

\hline
\end{tabular}

\end{center}

Comparing this table with Theorem \ref{thm:isotopism_GTF}, it is not difficult to see that $(c,\alpha,\beta)$ can be chosen such that these 8 presemifields are pairwise non-isotopic. Furthermore, calculating the full Knuth orbits as in Example 1, we see that these operations can indeed produce 24 different isotopy classes from a single BEL-configuration.

\end{example}

\section{Rank two semifields}

Rank two semifields, that is, semifields which are two dimensional over a nucleus have received particular focus in recent years, due to their close connections to various objects in finite geometry. See for example \cite{LaPo2011}, Section 3 for an overview.

Suppose $n=2m$ is even, and suppose $\SSS$ is two dimensional over its right nucleus. Then we can assume
\[
\SSS(x,y) = f_1(x)y - (f_2(x)y)^{q^m}
\]
for some endomorphisms $f_1,f_2$. Then $\SSS=\SSS_{f,g}$, where $g = (1,x \mapsto -x^{q^m})$. Hence every rank two semifield is of the form $\SSS_{f,g}$ for some $f$, and so can be constructed from a BEL-configuration $(\D,U_f,W_g)$ in $V(2n,q)$. In this case, $\im(\psi_g) = \End_{\Fqm}(\Fqn)$, and  $\psi_g(\tilde{\cB}(W_g))$ is precisely the set of singular $\Fqm$-endomorphisms of $\Fqn$, and corresponds to a hyperbolic quadric in a projective $3$-dimensional space over $\Fqm$. 

Hence classifying BEL-configurations $(\D,U,W_g)$ in this case is precisely the same as classifying linear sets of rank $n$ skew from a hyperbolic quadric in $\PG(3,q^m)$. This is the approach used in for example \cite{CAPOTR2004}, \cite{JOMAPOTR2008}, \cite{MaPoTr2007}, \cite{EbMaPoTr2009} to classify rank two semifields of dimension $4$ and $6$ over their centre. 

So, as was shown in \cite{BEL2007}, the switching operation can be applied to rank two semifields. In \cite{Lavrauw2005}, it was shown that in this special case, switching has a geometric interpretation (``dualising an ovoid''). In \cite{Kantor2009} it was shown that in this case, switching is well defined up to isotopism. In \cite{BaBr2004} it was shown that this operation is, in general, non-trivial. In \cite{LUMAPOTR2008} this operation was christened the \emph{translation dual}.

Now we show that the operation $e$ defined in Section \ref{sec:belrank2} turns out to be trivial in this case. For the case $W = W_{(1,x \mapsto x^{q^m})}$, we have that $W^{\epsilon} = W$. For let $u,v \in W$. Then there exist $x,y \in \Fqn$ such that $u=(x,x^{q^m})$, $v = (y,y^{q^m})$. Then
\begin{align*}
b_{\epsilon}(u,v) &= \tr(xy^{q^m} - x^{q^m}y)\\
					&= \tr(xy^{q^m}) - \tr(xy^{q^{n-m}})\\
					&= 0,
\end{align*}
since $n=2m$. Hence when $W=W_{(1,x \mapsto -x^{q^m})}$,
\[
[U,W]^e = [U,W^{\epsilon}] = [U,W],
\]
implying that the operation $e$ is trivial in the case of rank two semifields. However, as seen in Section \ref{sec:belrank2}, this operation is not always trivial.

\bibliographystyle{plain}

\end{document}